\theoremstyle{plain}
\newtheorem{theo}{Theorem}[section]
\newtheorem*{theo*}{Theorem}
\newtheorem{cor}[theo]{Corollary}
\newtheorem{prop}[theo]{Proposition}
\newtheorem{lem}[theo]{Lemma}
\newtheorem*{defi*}{Definition}
\newtheorem{rem}[theo]{Remark}
\newtheorem{nota}[theo]{Notations}
\newcommand{\R}{\mathbb{R}}
\newcommand{\Z}{\mathbb{Z}}
\newcommand{\E}{\mathbb{E}}
\renewcommand{\P}{\mathbb{P}}
\newcommand{\red}[1]{{\color{black}{#1}}} %red
\newcommand{\mB}{\mathfrak{R}}
\newcommand{\un}{\mathds{1}}
\newcommand{\ox}{\overline{x}} 
\newcommand{\aXn}[1][]{|X_{#1}|}
\newcommand{\ow}{\overline{w}}
\newcommand{\oX}{\overline{X}} 
\newcommand{\oz}{\overline{z}} 
\newcommand{\oy}{\overline{y}}
\newcommand{\cst}{\frac{8}{\pi}}
\newcommand{\pun}{ p^{\dag}}
\newcommand{\co}{ {\mathbf{c_{_1}}}}
\newcommand{\cun}{ {\mathbf{c_{_1}}}}
\begin{document}

\title{Sub-diffusive behavior of a recurrent Axis-Driven Random Walk}

\begin{keyword}[class=AenMS]
\kwd[MSC2020 :  ] {05C81}, {60J55}, {60K10}, {60K40}
  % \kwd{}
 \end{keyword}
% % 05C81 Random walks on graphs

\begin{keyword}
\kwd{inhomogeneous random walks}
\kwd{conditioned random walks}
\kwd{sub-diffusive behavior}
%\kwd{ergodic theorem}
\end{keyword}

\author{\fnms{Pierre} \snm{Andreoletti}}
\address{
Institut Denis Poisson,  UMR C.N.R.S. 7013, Orl\'eans, France
}
\and 
\author{\fnms{Pierre} \snm{Debs}}
\address{Institut Denis Poisson,   UMR C.N.R.S. 7013,
Universit\'e d'Orl\'eans, Orl\'eans, France.} 
\runauthor{Andreoletti, Debs}

\begin{abstract}
We study the second order of the number of excursions of a simple random walk with a bias that drives a return toward the origin along  the axes introduced by P. Andreoletti and P. Debs \cite{AndDeb3}. This is a crucial step toward deriving the asymptotic behavior of these walks, whose limit is explicit and reveals various characteristics of the process: the invariant probability measure of the extracted coordinates away from the axes, the 1-stable distribution arising from the tail distribution of entry times on the axes, and finally, the presence of a Bessel process of dimension 3, which implies that the trajectory can be interpreted as a random path conditioned to stay within a single quadrant.
\end{abstract}

\maketitle

\section{Introduction}

Inhomogeneous random walks where the transition probabilities depend on the walker’s position have received considerable attention, with models ranging, for example, from oscillating walks \cite{Kemperman} to random walks in random environments \cite{Zeitouni}. A notable feature in these models is the disruption of the uniform transition behavior typical of simple random walks on graphs like $\mathbb{Z}^2$.

In this paper, we continue the investigation initiated in \cite{AndDeb3} and \cite{Pierre18}, where such random walk was introduced to model the movement of a particle subjected to a localized force field,  applied along specific spatial directions, while the particle remains free to diffuse elsewhere. More precisely in \cite{AndDeb3}, the model considers a nearest-neighbor random walk on $\mathbb{Z}^2$, where a restoring force is applied solely on the axes, and a simple random walk governs the motion off the axes. The intensity of this force depends on the particle's coordinates and a parameter $\alpha$, assumed large in both \cite{AndDeb3} and the present work, to emphasize the particle's concentration toward the origin.

This earlier work established the existence of two invariant probability measures characterizing the entrance and exit behavior of the walk along the axes. This led to a renewal theorem and an ergodic description of the particle's asymptotic behavior in that constrained region.

Here, we shift our focus to the behavior of the walk in the cones, the domains between the axes, where the walk is unaffected by the external force and thus diffuses freely. While the dynamics in the constrained region have been well described, understanding the long-time behavior in the unconstrained zones reveals a subtler, subdiffusive structure. This setting also shares connections with classical studies of random walks in cones and the quarter-plane, such as \cite{Denwac, Fayolle}, due to the geometric constraints induced by the axes.

Let us first provide more details about the model, denoted by $\mathbf{X} = (X_n)_{n \in \mathbb{N}}$ which starts at the coordinate $(1, 1)$ in $\Z^2$. The transition probabilities of this process differ depending on whether the walk is on the axes, denoted by $K^c := \{(k, m) \in \mathbb{Z}^2 \mid km = 0\}$, or on $K$, which consists of four cones. Specifically, $\mathbf{X}$ is a simple random walk on $K$, meaning that $p(x, x \pm e_i) = \frac{1}{4}$ for all $x \in K$, where $e_i$ is a vector of the canonical basis. However, on $K^c \setminus \{(0, 0)\}$, the walk is directed towards the origin: let $\alpha \geq 0$, for all $i > 0$,
\begin{align*}
    p((i, 0), (i + 1, 0)) &= p((i, 0), (i, \pm 1)) = p((0, i), (0, i + 1)) = p((0, i), (\pm 1, i)) = \frac{1}{4i^{\alpha}}, \\
    p((i, 0), (i - 1, 0)) &= p((0, i), (0, i - 1)) = 1 - \frac{3}{4i^{\alpha}},
\end{align*}
and symmetrically when $i < 0$. Note that the origin is a special point, as $p((0, 0), \pm e_i) = \frac{1}{4}$.
\begin{figure}[htpb]
\centering
\includegraphics[scale=0.6]{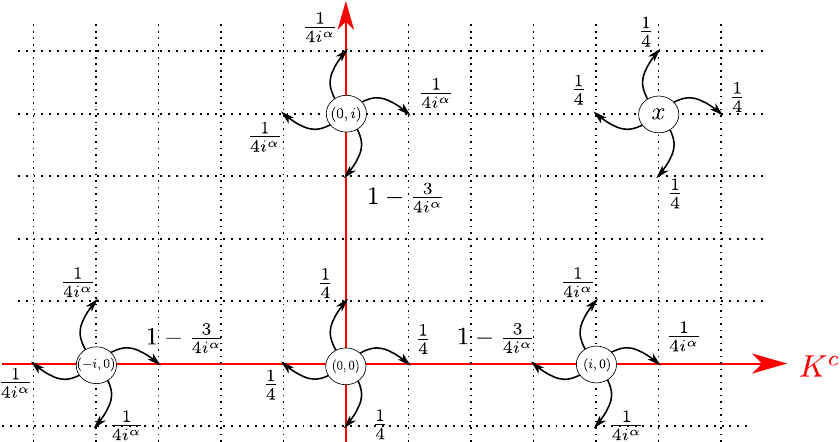}
\caption{probabilities of transition of $\mathbf{X}$ }
\label{whynot} 
\end{figure}
\\

    \noindent The first theorem below is our main result; it provides a description of the behavior of the walk in the quadrants. If $x=(x_1,x_2)$ and $a=(a_1,a_2)$ are in $\mathbb R^2$, we denote $|x|$ for $(|x_1|,|x_2|)$  and $\lbrace x\ge a\rbrace$ for $\{x_1 \geq a_1, x_2 \geq a_2  \}$.

\begin{theo} \label{The1} Assume $\alpha>3$, for any $a=(a_1,a_2) \in (\R_+^*)^2$
\begin{align}
\P\Big( \sqrt{\frac{\log n}{n}} \aXn[n] \geq a\Big)=\int_{0}^{+\infty}\frac{\P(\mB_s \geq 2 a)}{s}\mathrm{d}\mu_{\mathcal{S}_1^0}(s), 
\end{align} 
\begin{comment}

\end{comment}
with $\mB_{s}=(\mB_{s}^1,\mB_s^2)$,  $\mB^1$ and $\mB^2$ are two independent Bessel processes of dimension 3 
and $\mu_{\mathcal{S}_1^0}$ is a 1-stable distribution which is described in the second theorem below.
\end{theo}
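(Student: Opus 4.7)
The strategy is to decompose the trajectory of $\mathbf{X}$ into successive excursions into the cones $K$ between consecutive visits to the axes $K^c$. Let $\tau_k$ denote the $k$-th return time to $K^c$, $N_n := \max\{k : \tau_k \leq n\}$ the number of completed excursions by time $n$, and $R_n := n - \tau_{N_n}$ the time already spent in the ongoing excursion. I would analyse separately the law of $R_n$ and the conditional law of $X_n$ given $R_n$, and combine them through
\begin{equation*}
\P\bigl(\sqrt{\log n/n}\,\aXn[n] \geq a\bigr) = \sum_{\ell\geq 1}\P(R_n = \ell)\,\P\bigl(\sqrt{\log n/n}\,\aXn[n] \geq a \,\big|\, R_n = \ell\bigr).
\end{equation*}

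For the conditional piece, two inputs are needed: (i) by \cite{AndDeb3} and the assumption $\alpha>3$, the entry point into a fresh cone excursion is tight and sits within $O(1)$ of the origin; (ii) given the entry point and the duration $R_n$, the ongoing piece of $\mathbf{X}$ is a two-dimensional simple random walk started at that entry point and conditioned to remain in the open quadrant throughout $R_n$ steps. The Denisov--Wachtel invariance principle for random walks in cones, specialised to the quarter-plane (where the discrete harmonic function $h(x,y)=xy$ factors), then shows that such an excursion of length $\ell$, rescaled by $\sqrt{\ell}$, converges to a two-dimensional Brownian motion conditioned to remain in the first quadrant, which coincides in law with the product of two independent Bessel processes of dimension $3$. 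Conditionally on $R_n=\ell$, one therefore has
\begin{equation*}
\sqrt{\log n/n}\,\aXn[n] \;\stackrel{d}{\approx}\; \sqrt{\ell\log n/n}\,(\mB^1_1, \mB^2_1) \;\stackrel{d}{=}\; \mB_{\ell\log n/n},
\end{equation*}
the absolute factor $2$ appearing on the right-hand side of the theorem absorbing the variance $1/2$ of each coordinate of a nearest-neighbour step on $\Z^2$.

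For the distribution of $R_n$, I would use that the length of a single cone excursion has a tail of the form $\P(\tau_k-\tau_{k-1}>t)\sim c/t$, which is the classical tail of the exit time from the first quadrant for the two-dimensional simple random walk started next to the corner. Such inter-arrival times lie in the domain of attraction of a $1$-stable law, producing $\mu_{\mathcal{S}_1^0}$ (to be formalised in the next theorem). A Dynkin--Lamperti-type renewal theorem in this borderline infinite-mean case yields the joint weak limit of $(N_n\log n/n, R_n\log n/n)$; the marginal density of the rescaled age $s = R_n\log n/n$ is proportional to $s^{-1}\mathrm{d}\mu_{\mathcal{S}_1^0}(s)$, the factor $1/s$ being the usual inspection-paradox weight applied to the straddling excursion.

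Plugging the second display into the first and letting $n\to\infty$ converts the discrete sum into the integral $\int_0^\infty \P(\mB_s\geq 2a)\,s^{-1}\mathrm{d}\mu_{\mathcal{S}_1^0}(s)$ of the theorem. The main obstacle, to my eye, will be to make the Denisov--Wachtel invariance principle uniform both in the random entry point and in the conditioning duration $R_n$, so that it can be integrated against the stable joint limit of $(N_n, R_n)$; a secondary subtlety is the fine renewal analysis in the borderline case of stable index $1$, where logarithmic corrections to the centering of $\tau_k$ must be tracked carefully in order to identify $\mu_{\mathcal{S}_1^0}$.
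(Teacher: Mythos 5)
Your high-level decomposition (age of the ongoing excursion $\,\times\,$ conditional Bessel limit $\,\times\,$ stable renewal theorem) is indeed the skeleton of the paper's proof, but two of the steps you treat as black boxes are where the actual difficulty lies, and one step is missing.

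First, the renewal analysis you call a ``secondary subtlety'' is in fact the main technical content (Theorem \ref{SONn} and its preparatory Corollary \ref{corrhom}, Proposition \ref{lem2.3}, Lemma \ref{leLem}), and it cannot be obtained from a Dynkin--Lamperti theorem applied to i.i.d.\ durations: the excursion lengths $\rho_j-\rho_{j-1}$ are \emph{not} i.i.d., since the re-entry points $(X_{\rho_j})_j$ form a positive recurrent Markov chain and the excursion lengths depend on those points. The paper handles this by writing the Laplace transform of $\rho_m$ conditionally on $(X_{\rho_j})_j$ (equation \eqref{CondLap}), extracting the centering $A_m$ and a Birkhoff-type averaging for $F_m$, and only then identifying the $1$-stable limit; the $\gamma-1$ and $\log\lambda$ terms in $\mathbf{S_1}$ come out of this Markov-conditioned analysis and are not visible from a bare tail estimate $c/t$ for a single excursion. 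A second genuine gap is in ``given the entry point and $R_n$, the ongoing piece is a SRW conditioned to remain in the open quadrant throughout $R_n$ steps.'' The event $\{R_n=\ell\}$ (i.e.\ $n-\rho_{N_n}=\ell$) does not imply the walk stays in the cone for all of those $\ell$ steps: it may have already hit an axis and not yet returned to $K$. The paper disposes of this case twice: Lemma \ref{lemneg} shows that $\{|X_n|\ge b_n e_i,\ X_n\in K^c\}$ is asymptotically negligible, and the term $P_2$ in Section 2.2 controls the corresponding contribution to $N_n$. Your argument would silently misattribute probability mass to the Bessel regime without these estimates.

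Where you do depart from the paper in a way that could in principle work: you propose to invoke the Denisov--Wachtel invariance principle for the conditional law of $X_n$ in the cone, whereas the paper derives the product-Bessel(3) tail directly from its own local limit theorem for $(\eta,X_\eta)$ (estimates \eqref{locallimit} and \eqref{Bessel3}). The paper's route is more elementary and gives exactly the uniform asymptotics in $\ox=o(\sqrt m)$ needed to integrate against the measure $\mu^h_{n,X}$ from \eqref{Mesure}; if you go through Denisov--Wachtel you would still have to extract a uniform local statement of the same strength, which is the uniformity issue you flag. But the missing Markov-renewal machinery and the missing exclusion of $X_n\in K^c$ are the concrete gaps that would stop your proof from closing.
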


In this result, we observe that the normalization combines the diffusive nature of the particle within the cones (evidenced by the appearance of the square root term) and the sequence $n/\log n$ which already appears in the paper \cite{AndDeb3}. The $\log n$ term arises from the tail distribution of the entry time into the axes, starting from a point within the cone, the one-stable distribution also comes from this tail. Finally, the Bessel process of dimension 3 highlights that, most of the time, the process can be viewed as a diffusion conditioned to remain within a cone.

The proof of this theorem actually involves the second order of the law of large numbers for the number of excursions in a time $n$ entering and exiting the axis (the first order has been obtained in \cite{AndDeb3}). 
\noindent To introduce this fact let us introduce the following stopping times,  for any $i \in \mathbb{N}^*$:
\begin{align}
 \eta_i & := \inf \{k>\rho_{i-1},  X_{k} \in K^c \}, \nonumber \\ 
\rho_{i}& := \inf \{k>\eta_{i},  X_{k} \in K \},  \rho_0=0, \nonumber
\end{align}
that is respectively the $i$-th  exit and entrance times in $K$. We also introduce the first exit time from the axis $\rho:=\inf \{k>0,  X_{k} \in K^c \}$ and the first exit time from the cone $\eta:=\inf \{k>0,  X_{k} \in K^c \}$, note that $ \eta_1 = \eta$. Also for any positive integer $n$, let:  
\begin{align}
N_n& := \max\{i \leq n,\ \rho_i \leq n \} \label{last},
\end{align}
which is the number of the last entrance in $K$ before the instant $n$. One of the results in  \cite{AndDeb3} writes : 
 \begin{prop} \textrm{(\cite{AndDeb3})} \label{prop31}
 Assume $\alpha>3$, when $n \rightarrow +\infty$
 \begin{align}
\frac{\log n}{n}N_n  \overset{\P}{ \rightarrow } \frac{\pi}
{8}\frac{1}{ \E(\pi^{\dag}) } =:\co,
\end{align}
where $\pi^{\dag}$ is the invariant probability measure of the positive recurrent Markov chains $(X_{\rho_i})_{i\in \mathbb N}$ and $\E(\pi^{\dag}) :=\sum_{x}\overline x \pi^{\dag}(x)$, equivalently 
\begin{align*}
    \frac{\rho_m}{m \log m} \rightarrow \frac{1}{\co}.
\end{align*}
\end{prop}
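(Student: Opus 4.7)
The plan is to write $\rho_m=\sum_{i=1}^m\tau_i$ with $\tau_i:=\rho_i-\rho_{i-1}$, and to analyze these increments through the Markov chain $(X_{\rho_i})_{i\ge 0}$, which is positive recurrent with invariant measure $\pi^{\dag}$ by \cite{AndDeb3}. Since $N_n$ is the generalized inverse of $(\rho_i)_{i\ge 0}$, the statement on $N_n$ follows from that on $\rho_m$ via the sandwich $\rho_{N_n}\le n<\rho_{N_n+1}$, so the main task is to establish $\rho_m/(m\log m)\to 1/\co$ in $\P$-probability.

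Each excursion splits into a cone part and an axis part, $\tau_i=(\eta_i-\rho_{i-1})+(\rho_i-\eta_i)$. The analysis of the biased walk on $K^c$ developed in \cite{AndDeb3} shows that, for $\alpha>3$, the time on $K^c$ between consecutive returns to $K$ has uniformly bounded expectation (the strong drift toward the origin is essential here). Consequently $\sum_{i\le m}(\rho_i-\eta_i)=O_{\P}(m)=o(m\log m)$, so the whole $m\log m$ scaling must come from the cone sojourns $\eta_i-\rho_{i-1}$.

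The central step is the tail estimate
\begin{equation*}
\P_x(\eta>n)\sim \cst\,\frac{\ox}{n},\qquad n\to\infty,
\end{equation*}
for $x\in K$ an entry point from the axis (necessarily of the form $(\pm k,\pm 1)$ or $(\pm 1,\pm k)$, with $\ox$ encoding the distance along the axis just left). This rests on two classical facts about the simple random walk in a quarter plane: $(x_1,x_2)\mapsto x_1 x_2$ is a positive harmonic function vanishing on the boundary, and an invariance principle for walks conditioned to survive (in the spirit of \cite{Denwac}) matches the discrete survival probability with the corresponding explicit Brownian quantity in the quadrant. Averaging this tail against $\pi^{\dag}$ yields $\E_{\pi^{\dag}}[\eta\wedge n]\sim \cst\,\E(\pi^{\dag})\log n$.

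To pass from this tail estimate to the weak law $\rho_m/(m\log m)\to 1/\co$, one invokes a law of large numbers for Markov additive functionals whose truncated mean grows only logarithmically---the slowly varying regime lying at the boundary between the classical LLN and a one-stable limit. The main technical obstacle is that the $\tau_i$ are not i.i.d., their law depending on the Markov state $X_{\rho_{i-1}}$; this is handled by combining the positive recurrence of $(X_{\rho_i})$ with the integrability $\E(\pi^{\dag})=\sum_x\ox\,\pi^{\dag}(x)<\infty$, the key a priori estimate to extract from \cite{AndDeb3}. The conclusion for $N_n$ then follows by the standard monotone inversion.
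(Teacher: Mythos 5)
Since the paper states Proposition~\ref{prop31} as a citation to \cite{AndDeb3} and does not reprove it, the benchmark is the method of \cite{AndDeb3}, of which the paper gives a clear fingerprint around \eqref{papier1}--\eqref{Aconv}: one computes the conditional Laplace transform $\E(e^{-\lambda\rho_{m}/m}\mid X_{\rho_1},\dots,X_{\rho_m})$ via the strong Markov property, applies the Birkhoff ergodic theorem to the truncated means $A_m(X_{\rho_{j-1}},X_{\rho_j})$, and reads off the first-order asymptotics $\E(e^{-\lambda\rho_m/m})=e^{-(\lambda/\co)\log m+O(1)}$. Your proposal takes a genuinely different route: decompose $\rho_m$ into cone and axis sojourns, establish the tail estimate $\P_x(\eta>n)\sim\cst\,\ox/n$ directly (which is \eqref{lleta2}), average it against $\pi^{\dag}$ to get $\E_{\pi^\dag}[\eta\wedge n]\sim\cst\E(\pi^\dag)\log n$, and then close via a weak LLN for Markov-modulated sums whose truncated means are slowly varying. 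The numerology checks out ($1/\co=\cst\E(\pi^\dag)$), and the sandwich $\rho_{N_n}\le n<\rho_{N_n+1}$ gives the inversion to $N_n$, so the global plan is sound and gives what the Laplace-transform route also gives.

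There are, however, two points to flag. First, your claim that the axis sojourn ``has uniformly bounded expectation'' is not correct as stated: by \eqref{momrho} one has $\E_x(\rho)\sim\ox$, which is unbounded over the starting point; what one actually needs (and has, via $\E(\pi^\dag)<\infty$ and $\E_x(\oX_\eta)\le C\ox$) is that the expectation averaged over the stationary chain is finite, after which Birkhoff yields $\sum_{i\le m}(\rho_i-\eta_i)=O_\P(m)$. Second, and more substantively, the ``law of large numbers for Markov additive functionals whose truncated mean grows only logarithmically'' is invoked as a black box. This is precisely the nontrivial boundary regime ($1$-stable tails with slowly varying normalization), and it is not off-the-shelf for dependent increments; you would need to carry out a truncation at level $m\log m$, control the variance of the truncated Markov-modulated sum (using ergodicity/mixing of $(X_{\rho_i})$), and bound the probability of any excursion exceeding the truncation via the uniform-in-$x$ tail estimate. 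The Laplace-transform-plus-Birkhoff argument in the paper is exactly the device that packages those three steps; without either citing a precise Markov-chain WLLN in this regime or reproducing the truncation/variance argument, your sketch leaves the key analytic step unjustified, though the overall architecture is correct.
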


\noindent Our second theorem is the joint distribution of the second order for $N_n$ and $n-\rho_{N_n}$. 

\begin{theo} \label{SONn} 
Assume $\alpha>3$, for any $a\geq 0$ and $h \geq 0$, when $n$ goes to infinity
 \begin{align}
\P\left( \frac{(\log n)^2 }{n} \left(N_n- \co \frac{n}{\log n}  \right)\geq h,  \frac{\log n }{n} \left(n-\rho_{N_n}\right) \geq a \right) \rightarrow \int_a^{+ \infty} \frac{1}{s} \mathrm{d}\mu_{\mathcal{S}_1^h}(s) \label{thm2eq2}
\end{align}
where $ \mu_{\mathcal{S}_1^h}$ has the $\log$-Laplace transform, given by for any $\lambda>0$,
\begin{align}
\log \hat{\mu}_{\mathcal{S}_1^h}(\lambda) := \lambda \Big(-\co\E_{\pi^\dag}\Big( \E_{X_{\eta}}( \rho )\Big)+ {\gamma-1}-\frac{h}{\co}    \Big)+   (\lambda-1) \log \lambda ,  \label{LapS_1}
\end{align}
where $\gamma$  is the constant of Euler–Mascheroni. 
\end{theo}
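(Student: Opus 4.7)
The strategy is to reduce Theorem~\ref{SONn} to a second-order limit theorem of $1$-stable type for $(\rho_m)_m$, and then translate this back to the joint law of $(N_n,n-\rho_{N_n})$ via duality combined with a heavy-tailed renewal argument. Using $\{N_n\geq m\}=\{\rho_m\leq n\}$ with $m_n := \lfloor \co n/\log n + h n/(\log n)^2\rfloor$, the event in (\ref{thm2eq2}) becomes a joint event on $\rho_{m_n}$ and the length of the super-excursion straddling time $n$. Decomposing $\rho_m=A_m+E_m$ with $A_m:=\sum_{i=1}^m(\rho_i-\eta_i)$ the cumulative axis time and $E_m:=\sum_{i=1}^m(\eta_i-\rho_{i-1})$ the cumulative cone time, the hypothesis $\alpha>3$ ensures finite-mean axis excursions; combined with the positive recurrence of $(X_{\rho_i})_i$ under $\pi^\dag$ from \cite{AndDeb3}, the ergodic theorem yields $A_m/m\to\E_{\pi^\dag}(\E_{X_\eta}(\rho))$ almost surely, producing after rescaling by $(\log n)^2/n$ the deterministic shift $-\co\E_{\pi^\dag}(\E_{X_\eta}(\rho))$ in (\ref{LapS_1}). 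All remaining fluctuations come from $E_m$.

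The analytic core is the tail estimate $\P_x(\eta>k)\sim c_x/\log k$ for $x\in K^c\setminus\{0\}$ as $k\to\infty$, with $c_x$ proportional to $\overline{x}$, yielding after averaging against $\pi^\dag$ that $\P_{\pi^\dag}(\eta>k)\sim 1/(\co\log k)$. This is the classical logarithmic tail for the exit time of a planar simple random walk from a quadrant, obtainable either by an invariance-principle reduction to Brownian motion in a cone or by a direct reflection argument. Since the cone durations $(\eta_i-\rho_{i-1})_i$ form a Markov additive functional of the stationary chain $(X_{\rho_{i-1}})_i$, a regeneration decomposition reduces the problem to an i.i.d.\ one; a $1$-stable central limit theorem with slowly varying tail then gives
\begin{align*}
\frac{E_m-m\log m/\co}{m}\;\xrightarrow[m\to\infty]{\mathrm{law}}\;\mathcal{S}_1^0,
\end{align*}
where $\mathcal{S}_1^0$ has the Laplace transform displayed in (\ref{LapS_1}) with $h=0$ (minus the axis shift). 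The Euler--Mascheroni constant $\gamma$ enters through the second-order Tauberian expansion of $1-\E_{\pi^\dag}[e^{-\lambda T/m}]$ for $T=\eta-\rho_{i-1}$, via the identity $\int_0^\infty e^{-u}\log u\,du=-\gamma$; this is what produces the constant $\gamma-1$ in (\ref{LapS_1}). The shift $-h/\co$ is the footprint of replacing $m$ by $m_n$ in the centering.

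To incorporate $n-\rho_{N_n}$, I would apply the strong Markov property at $\rho_{m_n}$: conditionally on $X_{\rho_{m_n}}$, approximately $\pi^\dag$-distributed by the previous step, the next super-excursion is independent of the past. The event $\{n-\rho_{N_n}\geq an/\log n\}$ then states that the super-excursion straddling $n$ has length at least $an/\log n$; a standard inspection-paradox/size-biasing computation for renewals whose inter-arrivals lie in the domain of attraction of a $1$-stable law produces the weight $1/s$ in the integrand of (\ref{thm2eq2}) and identifies the measure $\mu_{\mathcal{S}_1^h}$ as the $h$-dependent Laplace-transform law from Step~2.

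\textbf{Main obstacle.} I expect the main difficulty to be the precise form of the $1$-stable limit. The case $\alpha=1$ of stable limit theory is critical: the normalization is only $m$ rather than $m^{1/\alpha}$, the centering carries a $\log m$ factor, and the exact shape of the limiting Laplace transform (especially the constant $\gamma-1$) requires a careful second-order Tauberian expansion rather than only the leading order. On top of this, the increments $(\eta_i-\rho_{i-1})_i$ are only conditionally independent given $(X_{\rho_{i-1}})_i$, so a regeneration argument based on atoms/small sets of this chain (available via the construction of \cite{AndDeb3}) is required, together with uniform control in the starting state $x$ of the tail $\P_x(\eta>k)$. This same uniformity is what ensures that the residual-length computation of Step~3 is rigorous.
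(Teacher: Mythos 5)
Your high-level plan---prove a $1$-stable second-order limit theorem for $\rho_m$ first, then transport it to the joint law of $(N_n,n-\rho_{N_n})$---matches the paper's strategy, but one of your central analytic claims is wrong and the hardest step is left as a slogan. You state the core estimate as $\P_x(\eta>k)\sim c_x/\log k$, calling it ``the classical logarithmic tail'' for the exit time of planar SRW from a quadrant. The correct tail is $\P_x(\eta>k)\sim\frac{8\ox}{\pi k}$ (eq.~\eqref{lleta2}), an inverse-$k$ tail; the logarithm appears only in the truncated mean $\E_x(\eta\un_{\eta\le m})\sim\frac{8\ox}{\pi}\log m$ (eq.~\eqref{lleta3}), which is what forces the $m\log m$ centering. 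The rest of your outline is internally consistent with the $1/k$ tail, so this reads as a slip rather than a structural misunderstanding, but anything built literally on a $1/\log k$ tail would not produce a $1$-stable limit. More importantly, your account of the constant $\gamma-1$ only explains the $\gamma$ (via $\int_0^\infty e^{-u}\log u\,\mathrm{d}u=-\gamma$); the additional $-1$ in \eqref{LapS_1} is extracted in the paper only after an explicit second-order expansion of the constrained Laplace transform of $\rho_1$ (Lemma \ref{leLem}, see the interplay between $\Sigma_1^1$ and $\Sigma_2^1$), which your sketch does not address at all.

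Your route also genuinely differs from the paper's. For the $\rho_m$ limit you propose a regeneration (atom/small-set) reduction of $(X_{\rho_i})_i$ to i.i.d.\ blocks plus a classical $1$-stable CLT; the paper explicitly declines this, noting that Gnedenko--Kolmogorov-type techniques are inadequate here, and instead conditions on the whole sequence $(X_{\rho_i})_{i\le m}$, writes the conditional Laplace transform as $\exp\sum_j F_m(X_{\rho_{j-1}},X_{\rho_j})$ (eqs.~\eqref{CondLap}--\eqref{Fm}), and passes to the limit via Birkhoff applied to the limiting integrand $\widetilde F$ (Proposition \ref{lem2.3}). For the translation back to $(N_n,n-\rho_{N_n})$ you use the duality $\{N_n\geq m\}=\{\rho_m\leq n\}$ plus an inspection-paradox/size-biasing argument; the paper instead sums directly over the pair $(i,m)=(N_n,n-\rho_{N_n})$, decomposes according to whether the walk re-enters the axis before $n$ ($P_1$ vs.\ $P_2$), and identifies the dominant contribution as the vague limit of the random measure $\mu^h_{n,X}$ of \eqref{Mesure} (Proposition \ref{Vague}). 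The weight $1/s$ is exactly the size-biasing you anticipate, but making it rigorous for a Markov-modulated $1$-stable renewal requires decoupling the straddling excursion from $N_n$ uniformly in the chain state; that is precisely what the event $\mathcal{G}_n$ (Remark \ref{rem2.3}) and Lemma \ref{uniform} accomplish, and your strong-Markov-at-$\rho_{m_n}$ step glosses over it (the event $\{n-\rho_{N_n}\geq ab_n^2\}$ is not measurable at time $\rho_{m_n}$, and the random centering interacts with the chain state). A regeneration route may well be salvageable, but you would still need to supply an analogue of Lemma \ref{leLem} and of the $P_2$-negligibility argument, so it would not be appreciably shorter.
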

Note that $\frac{1}{s} \mathrm{d}\mu_{\mathcal{S}_1^0}(s)$ is the probability measure of the time elapsed between the last entrance of the random walk on the cone and the instant $n$ after proper normalization, which is $n/\log n$. This result provides a refined description of the fluctuations around the first-order renewal structure obtained in \cite{AndDeb3}. It reveals a subtle second-order structure that intertwines the behavior of the random walk along the axis and inside the cone.

\vspace{0.5cm}
    The rest of the paper is organized as follows, in the section below we prove the above theorem by studying the Laplace transform of the second order of the asymptotic of $\rho_{m}$. In Section 3 we apply the idea of the proof of this result to prove the first theorem. We also add an appendix where we prove technical estimates and collect usual facts on simple random walks.

\section{Proof of Theorem \ref{SONn}}

We assume $\alpha>3$ throughout this section, and those that follow, and do not mention it in the statements of our results.

\subsection{A second order for $(\rho_m)_m$}

In this Section, we study the second order for the random variable $\rho_m$, in particular the convergence in law of $\frac{1}{m}\left( \rho_m -  a_m \right)$  where
\[a_m:= \frac{1}{\co} m \log m.\]
The following corollary is a consequence of Proposition \ref{lem2.3} stated below.  

\begin{cor}\label{propLaplacecondition} \label{corrhom} 
The sequence $(\frac{1}{m}  (\rho_m-a_m))_{m\in\mathbb N^*}$ converges in distribution, the limit distribution is characterized by its log-Laplace given by 
\begin{align} 
 \forall \lambda>0, \ \mathbf{S_1}(\lambda) :=  -\lambda \E_{\pi^\dag}\big( \E_{X_{\eta}}( \rho )\big)+\frac{\lambda}{\co} \big( {\gamma-1} + \log \lambda \big), \label{Stable_1} \end{align}
where $\E_{\pi^\dag}$ is the expectation with respect to probability measure $\pi^\dag$. 
\end{cor}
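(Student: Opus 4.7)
The plan is to compute the log-Laplace transform $\log \E(e^{-\lambda (\rho_m - a_m)/m})$ and show it converges to $\mathbf{S_1}(\lambda)$. The natural decomposition is
\[
\rho_m = \sum_{i=1}^m \xi_i + \sum_{i=1}^m \tau_i, \qquad \xi_i := \eta_i - \rho_{i-1}, \quad \tau_i := \rho_i - \eta_i,
\]
separating the time spent inside the cone $K$ from the time spent on the axes $K^c$ during each excursion. By the strong Markov property, conditionally on the exit-point chain $(X_{\rho_i})_{i\geq 0}$, the $\xi_i$ are independent with $\xi_i$ distributed as a cone exit time starting from $X_{\rho_{i-1}}$, and similarly the $\tau_i$ are independent with $\tau_i$ distributed as an axis excursion starting from $X_{\eta_i}$.

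For the axis part, since $(X_{\rho_i})$ is positive recurrent with invariant law $\pi^\dag$ by Proposition \ref{prop31}, a standard ergodic argument combined with the finiteness of $\E_{\pi^\dag}(\E_{X_\eta}(\rho))$ yields $\frac{1}{m}\sum_{i=1}^m \tau_i \to \E_{\pi^\dag}(\E_{X_\eta}(\rho))$ in probability. This deterministic limit supplies the affine term $-\lambda \E_{\pi^\dag}(\E_{X_\eta}(\rho))$ in $\mathbf{S_1}(\lambda)$, so it remains to show that $\frac{1}{m}(\sum_{i=1}^m \xi_i - a_m)$ converges in distribution to a 1-stable law with log-Laplace $(\lambda/\co)(\log \lambda + \gamma - 1)$.

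For the cone part, the key input is the sharp tail asymptotic
\[
\P_{\pi^\dag}(\xi > t) = \frac{1}{\co \, t} + o(1/t), \qquad t \to \infty,
\]
the constant $1/\co$ being fixed by Proposition \ref{prop31} via a Tauberian correspondence ($\rho_m \sim (m/\co)\log m$). Using integration by parts,
\[
\E_{\pi^\dag}(1 - e^{-\lambda \xi/m}) = \frac{\lambda}{m}\int_0^\infty e^{-\lambda t/m} \P_{\pi^\dag}(\xi > t)\,\mathrm{d}t,
\]
and splitting at an intermediate scale $T_m \to \infty$ with $T_m = o(m)$, a direct expansion involving the exponential integral gives
\[
\E_{\pi^\dag}(1 - e^{-\lambda \xi/m}) = \frac{\lambda}{\co\, m}\bigl(\log m - \log \lambda - \gamma + 1\bigr) + o(1/m).
\]
Taking the $m$-th power, passing to the log-Laplace, and subtracting the centering $a_m/m = (\log m)/\co$ leaves exactly $(\lambda/\co)(\log \lambda + \gamma - 1)$; adding the axis contribution produces $\mathbf{S_1}(\lambda)$, and L\'evy's continuity theorem gives the convergence in distribution.

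The main obstacles are twofold. First, the sharp tail $\P_{\pi^\dag}(\xi > t) \sim 1/(\co t)$ with enough uniformity in the starting point to allow averaging over $\pi^\dag$ requires fine estimates on the cone exit time of a two-dimensional simple random walk, comparable to quarter-plane and Bessel-process asymptotics; moreover, the precise constant $-1$ in $\gamma - 1$ is governed by the next-order correction to this tail (the regularly-varying part of $\E_{\pi^\dag}(\xi \wedge T) - (1/\co)\log T$), so one cannot get away with the leading-order tail alone. Second, the sum $\sum \xi_i$ is only Markov-modulated, not iid; I would reduce to an effectively iid analysis via a regeneration scheme based on returns of $(X_{\rho_i})$ to a distinguished state, or equivalently by carrying the conditional Laplace transform through and using ergodicity of $(X_{\rho_i})$ to replace sample averages by averages under $\pi^\dag$. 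The interplay between these two technical pieces, and pinning down the exact constants, is where the bulk of the work lies.
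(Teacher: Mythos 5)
Your high-level plan — split $\rho_m$ into cone times $\xi_i$ and axis times $\tau_i$, average the $\tau_i$ by ergodicity to produce the affine term, and run a 1-stable Laplace computation on the $\xi_i$ driven by the $\sim 1/(\co t)$ tail — is conceptually aligned with what the paper does through the conditional Laplace transform \eqref{CondLap} with centering $\sum_j A_m(X_{\rho_{j-1}},X_{\rho_j})$. But there is a genuine gap in pinning down the constant. The leading tail $\P_{\pi^\dag}(\xi>t)\sim\frac{1}{\co t}$ fixes only the $\frac{\lambda}{\co}(\gamma+\log\lambda)$ piece (and the $\log m$ that cancels against $\lambda a_m/m$); the extra $-1$ in $\gamma-1$ corresponds precisely to the claim $\lim_{T\to\infty}\bigl(\E_{\pi^\dag}(\xi\wedge T)-\frac{1}{\co}\log T\bigr)=\frac{1}{\co}$, a second-order quantity that the leading-order Tauberian input does not determine. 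You acknowledge this ("one cannot get away with the leading-order tail alone") but then simply write down the target expansion $\frac{\lambda}{\co m}(\log m-\log\lambda-\gamma+1)+o(1/m)$ without deriving it. In the paper this constant is extracted inside the proof of Lemma \ref{leLem}: the "$+1$" is produced by the boundary term $(b_m^2+1)\P_x(\eta>b_m^2,\oX_\eta>m^{1/2-\varepsilon})\to\frac{8\ox}{\pi}$ (coming from converting $\sum_{i\le T}\P_x(\eta>i,\cdot)$ into $\E_x(\eta\un_{\eta\le T},\cdot)$) together with a careful split of the small-$z$ and large-$z$ contributions to $\P_z(X_\rho=y)$; none of that appears in your sketch beyond the phrase "fine estimates."

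A secondary but real gap is the reduction from the Markov-modulated sum to an effectively iid analysis. You mention two options (regeneration, or carrying the conditional Laplace through and using ergodicity of $(X_{\rho_i})$), the second of which is the paper's route, but neither is executed. To make it work one needs the expansion to hold uniformly over starting and landing points ($\ox=o(m^{1/2})$, $\oy\le m^{1/2-\varepsilon}$), the confinement estimate $\P(\mathcal{B}_m^c)\to 0$ of Lemma \ref{uniform} to restrict to that range, and the integrability bounds $\E_{X_\eta}(\rho)\le C_0\oX_\eta$, $\E_x(\oX_\eta)\le C_1\ox$ to justify Birkhoff's theorem. Also, the paper's centering is by the conditional truncated cone time $A_m(x,y)=\E_x(\eta\un_{\eta\le m}\mid X_{\rho_1}=y)$, not directly by $a_m$, and the identification $\sum_j A_m = a_m + o(m)$ in probability (equation \eqref{Aconv}) is a separate step obtained by matching against the first-order result of \cite{AndDeb3} — a step that your direct centering implicitly needs but does not supply.
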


Such computations are standard in the case of independent random variables; however, this is not the scenario here. Indeed, in this context, $\rho_m$ is generated from a Markov chain, and the classical techniques developed by Gnedenko-Kolmogorov \cite{GneKol} prove inadequate. The approach also differs from the scenario in which random variables are nearly independent, as in \cite{AndDev}, where a renewal structure is also studied. An effective approach in this context is to initially examine the probability limit of this Laplace transform, conditioned on the Markov chain $(X_{\rho_i})_{i\in\mathbb N}$, and subsequently derive the final convergence from it.

The idea of the proof is to understand the probability behavior of a conditional log-Laplace transform of $\rho_m$.
For this purpose, for any $m\geq 1$, we need to introduce the function $A_m$ defined for $x \in K$ and $y \in \partial K$ by: 
\begin{align}
A_m(x,y):= \E_x(\eta \un_{ \eta \leq m} |{X_{ \rho_1}=y}), \mbox{ where } \ \rho_1:=\inf\{k>\eta,  X_k\in K\}.  \label{Am}
\end{align}
 $A_m$ is in fact involved in the first-order asymptotic behavior of $\rho_m$, note indeed the presence of $\eta \un_{\eta \leq m}$ which produces the $\log m$ obtained in \cite{AndDeb3}, recalled here in Proposition \ref{prop31}. We first apply the strong Markov property, 
\begin{align}
&\E\left(e^{- \frac{\lambda}{m} (\rho_m-\sum_{j=1}^{m} A_m(X_{\rho_{j-1}},X_{\rho_{j}}) )}\Big|X_{\rho_i}, 1\le i\le m \right) =e^{\sum_{j=1}^{m} F_m(X_{\rho_{j-1}},X_{\rho_{j}})}\label{CondLap}
\end{align}
where  
\begin{align}  
F_m(x,y):=\log \E_{x}\left(e^{- \frac{\lambda}{m} (\rho_1 -A_m(x,y)) }\Big| X_{\rho_1}=y\right) 
\label{Fm}  
\end{align} 
and recall that $\rho_0=0$ and $X_0=(1,1)$. From this expression, it becomes clear that a key aspect is the approximation of $\sum_{j=1}^{m} F_m(X_{\rho_{j-1}}, X_{\rho_{j}})$, which is the main result of this section.

\begin{prop} \label{lem2.3} Let $d>0$,  the following limit in $m \rightarrow + \infty$ holds in probability 
\begin{align}\label{ergo}
& \sum_{j=1 }^{dm} F_{m}(X_{\rho_{j-1}},X_{\rho_{j}}) \rightarrow d\mathbf{S_1}(\lambda),
\end{align}
note that we assume $dm$ to be an integer.
\end{prop}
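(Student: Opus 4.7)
The plan is to establish first a pointwise asymptotic expansion of the form $mF_m(x,y) = G(x,y,\lambda) + o(1)$ with $G$ an explicit function, and then to pass to the limit in the sum via an ergodic theorem for the positive recurrent Markov chain $(X_{\rho_i})_{i\in\mathbb{N}}$ with invariant measure $\pi^{\dag}$. The key structural input is that each excursion decomposes as $\rho_1 = \eta + (\rho_1 - \eta)$, where the cone part $\eta$ is heavy-tailed (in the domain of attraction of a $1$-stable law) while the axis part $\rho_1 - \eta$ has all moments under the assumption $\alpha > 3$; the function $A_m$ is precisely the truncated mean of the divergent contribution.

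For the pointwise expansion I would write
\[F_m(x,y) = \frac{\lambda}{m}A_m(x,y) + \log \E_x\bigl(e^{-\frac{\lambda}{m}\eta}\, e^{-\frac{\lambda}{m}(\rho_1-\eta)} \bigm| X_{\rho_1}=y\bigr),\]
and use the strong Markov property at time $\eta$, conditioning additionally on $X_\eta$, to decouple the two factors in the expectation. The axis factor then contributes, by a Taylor expansion legitimate because $\rho_1 - \eta$ has finite moments on the drifted axis, the term $-\frac{\lambda}{m}\E_x(\rho_1-\eta \mid X_{\rho_1}=y) + O(1/m^2)$. The cone factor, together with the centering $\frac{\lambda}{m}A_m(x,y)$, is exactly the classical truncated-mean centering for a random variable in the domain of attraction of a $1$-stable law: assuming a tail estimate $\P_x(\eta > k \mid X_{\rho_1}=y) \sim c(x,y)/k$ (inherited from the planar SRW half-plane exit-time estimates collected in the Appendix) and the elementary identity
\[\int_0^\infty \bigl(e^{-\lambda t}-1+\lambda t\,\un_{\{t\leq 1\}}\bigr)\frac{\mathrm{d}t}{t^2} = \lambda(\log \lambda + \gamma - 1),\]
a scaling in $m$ yields
\[\frac{\lambda}{m}A_m(x,y) + \log \E_x\bigl(e^{-\frac{\lambda}{m}\eta} \bigm| X_{\rho_1}=y\bigr) = \frac{c(x,y)\,\lambda}{m}(\log \lambda + \gamma - 1) + o(1/m).\]
Adding the two contributions defines $G(x,y,\lambda) := c(x,y)\lambda(\log\lambda + \gamma - 1) - \lambda\,\E_x(\rho_1-\eta \mid X_{\rho_1}=y)$.

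To pass from pointwise to the full sum, I would then apply the ergodic theorem for $(X_{\rho_i})_i$ to the observable $G$, yielding
\[\sum_{j=1}^{dm} F_m(X_{\rho_{j-1}},X_{\rho_j}) = \frac{1}{m}\sum_{j=1}^{dm} G(X_{\rho_{j-1}},X_{\rho_j},\lambda) + o_{\P}(1) \;\longrightarrow\; d\int G\,\mathrm{d}(\pi^{\dag}\otimes p)\]
in probability, where $p$ is the transition kernel of $(X_{\rho_i})_i$. Identifying $\int c(x,y)\,\mathrm{d}(\pi^{\dag}\otimes p) = 1/\co$ via Proposition \ref{prop31} (the total tail mass per excursion is exactly what produces the $\log n$ in the first-order renewal) and the strong Markov identity $\E_{\pi^{\dag}}\bigl(\E_x(\rho_1-\eta \mid X_{\rho_1})\bigr) = \E_{\pi^{\dag}}(\E_{X_\eta}(\rho))$, the right-hand side equals $d\mathbf{S_1}(\lambda)$.

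The principal obstacle is making the pointwise $o(1/m)$ error survive summation over $dm$ terms: one needs a uniform-in-state expansion with an error bound integrable against $\pi^{\dag}$, which requires uniform tail control on $\eta$ with respect to the entrance point $X_\eta$ and sufficient integrability of the axis observable $x \mapsto \E_x(\rho_1-\eta)$. Both should follow by refining the quantitative hitting-time and moment estimates developed in \cite{AndDeb3}.
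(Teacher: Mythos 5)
Your proposal follows essentially the same strategy as the paper: a pointwise expansion $m\,F_m(x,y)\to G(x,y,\lambda)$ whose cone part produces the Lévy--Khintchine constant $\lambda(\log\lambda+\gamma-1)$ and whose axis part produces $-\lambda\,\E_x(\E_{X_\eta}(\rho))$, followed by Birkhoff's theorem for the positive recurrent chain $(X_{\rho_i})_i$. The paper packages the pointwise expansion as Lemma~\ref{leLem} (proved by decomposing through $z=X_\eta$, integrating the Laplace transform of $\eta$ by parts, and carrying out the Riemann-sum computation directly), together with Lemma~\ref{uniform} to restrict $\oX_{\rho_i}\le m^{1/2-\varepsilon}$ and $\oX_{\eta_i}\le m^{1/2+\varepsilon}$ so that the expansion applies uniformly along the trajectory.

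Two places where you are gliding over nontrivial points. First, your ``decoupling'' of $\log\E_x\bigl(e^{-\frac{\lambda}{m}\eta}e^{-\frac{\lambda}{m}(\rho_1-\eta)}\mid X_{\rho_1}=y\bigr)$ into a cone factor plus an axis factor is not automatic: conditionally on $X_{\rho_1}=y$, the variables $\eta$ and $\rho_1-\eta$ are \emph{not} independent --- they are coupled through $X_\eta$, and a long cone excursion exits far out on the axis, producing a long axis return. The cross-covariance $\mathrm{Cov}_x(e^{-\lambda\eta/m},\E_{X_\eta}(\rho)\mid X_{\rho_1}=y)$ turns out to be of order $\ox\, m^{-1/2}$ (since $\E_{X_\eta}(\rho)\asymp\oX_\eta\asymp\sqrt{\eta}$ on the heavy-tail event), so the multiplicative error in the factorization is $O(\ox\, m^{-3/2})=o(\ox/m)$, which is indeed negligible --- but this requires an argument, not an assertion. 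The paper sidesteps the issue by performing the strong-Markov decomposition at the level of the expectation itself (the sum over $z$ in the proof of Lemma~\ref{leLem}), never taking a logarithm of a product that isn't one. Second, the axis Taylor error is not $O(1/m^2)$ as you write: $\E_x\bigl((\rho_1-\eta)^2\bigr)$ diverges logarithmically (since $\P_x(\oX_\eta>r)\asymp\ox/r^2$), so the remainder is $O(\ox\,m^{-2}\log m)$. This is still $o(\ox/m)$, so the conclusion survives, but the $\ox$ dependence is exactly what matters for summability against $\pi^\dag$ in the ergodic step, and suppressing it hides the whole point of the uniformity issue you correctly flag at the end.

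So the plan is sound and matches the paper's; what is missing is the content of Lemma~\ref{leLem} (the uniform-in-$(\ox,\oy)$ expansion with $o(\ox/m)$ error) and Lemma~\ref{uniform} (the a priori bound on the entrance/exit coordinates), which you acknowledge as the obstacle but do not address. These constitute the bulk of the actual work.
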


The proof of this proposition relies on two lemmata: the main one below studies the Laplace transform of $\rho_1/m$ and its proof, being technical, is postponed in the Appendix. Let us first introduce some notations:

\begin{nota} \label{nota1}
For all $z=(z_1,z_2)\in \Z$, $\oz:=\max(|z_1|,|z_2|)$. In addition, the notation $z>y$ with $z,y\in K^c$ means that the scalar product $<z,y>>0$ and $\oz>\oy$. Note that if $y=(0,0)$, we write that $z>y$ for all $z\in K^c\backslash\{(0,0)\}$.\\
For typographical simplicity, we expend this notation if $y\in\partial K$, more precisely in this case $z>y$ (resp. $z<y$) means that $y=y^\prime\pm e_i$ with $y^\prime\in K^c$ and $z>y^\prime$ (resp. $z<y^\prime$).\\
Moreover we write $\P_{\infty}(X_{\rho}=y)$ for $\lim_{\oz \rightarrow \infty+, z>y}\P_{z}(X_{\rho}=y)$. Note that this limit exists and is strictly positive as $(\P_{z}(X_{\rho}=y))_{z>y}$ is a non-increasing sequence bounded from below by $\frac{1}{4\oy^\alpha}\prod_{z>y}(1-\frac{3}{4\oz^\alpha})>0$, as $\alpha>1$.
\end{nota}

\noindent The following Lemma is proved in Section \ref{techlem},

\begin{lem} \label{leLem} 
Let $x\in K$, $y \in \partial K$, when $m$ goes to infinity, uniformly in $\ox=o(m^{1/2})$ and $\oy\le m^{\nicefrac{1}{2}-\varepsilon}$, for any $\varepsilon>0$ small enough:
\begin{align*} &\E_{x}\left(e^{- \frac{\lambda}{m} \rho_1 }\un_{X_{\rho_1}=y,X_{\eta_1}\le m^{\nicefrac{1}{2}+\varepsilon}}\right)- \P_x(X_{\rho_1}=y)+ \frac{\lambda}{m} \E_x(\eta \un_{\eta \leq m} \un_{X_{\rho_1}=y}) \\
& =-\frac{\lambda}{m}\E_x( \E_{X_{\eta}}(\rho\un_{X_{\rho}=y}) )+ \frac{\lambda}{m} \frac{8 \ox}{\pi } ( \gamma-1+  \log \lambda ) \P_{\infty}(X_{\rho}=y) + o\Big(\frac{\ox}{m}\Big),
\end{align*}
where $\gamma$ is the constant of Euler–Mascheroni.
\end{lem}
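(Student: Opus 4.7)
The plan is to split $\rho_1=\eta_1+(\rho_1-\eta_1)$ via the strong Markov property at $\eta_1$. On the ``axis portion'' $\rho:=\rho_1-\eta_1$, the strong drift coming from $\alpha>3$ ensures that $\E_z(\rho^k)$ is finite for every $k$ and at worst polynomial in $\oz$, which allows a Taylor expansion in $\lambda/m$. On the ``cone portion'' $\eta_1$, the heavy $1/t$-tail of the hitting time of the axis requires a direct Laplace/tail computation that will generate both the $\log\lambda$ and the Euler--Mascheroni $\gamma$ appearing in the statement. Concretely, I would first write
\begin{align*}
\E_x\bigl(e^{-\lambda\rho_1/m}\un_{X_{\rho_1}=y,\,X_{\eta_1}\le m^{1/2+\varepsilon}}\bigr) = \E_x\bigl(e^{-\lambda\eta_1/m}\,\Phi(X_{\eta_1})\,\un_{X_{\eta_1}\le m^{1/2+\varepsilon}}\bigr),
\end{align*}
with $\Phi(z):=\E_z(e^{-\lambda\rho/m}\un_{X_\rho=y})$, and Taylor-expand $\Phi(z)=\P_z(X_\rho=y)-\tfrac{\lambda}{m}\E_z(\rho\un_{X_\rho=y})+O\bigl((\lambda/m)^2\E_z(\rho^2)\bigr)$; the quadratic remainder contributes $o(\ox/m)$ after outer integration.

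Next I would handle the first-order Taylor piece $-\tfrac{\lambda}{m}\E_x(e^{-\lambda\eta_1/m}\E_{X_{\eta_1}}(\rho\un_{X_\rho=y})\un_{X_{\eta_1}\le m^{1/2+\varepsilon}})$ by replacing $e^{-\lambda\eta_1/m}$ by $1$ (the correction is $O(\lambda^2\ox\log m/m^2)=o(\ox/m)$, using $\E_x(\eta_1\wedge m)=O(\ox\log m)$) and then dropping the indicator on $X_{\eta_1}$ (the event $\{X_{\eta_1}>m^{1/2+\varepsilon}\}$ has probability $O(\ox/m^{1+2\varepsilon})=o(\ox/m)$ since $|X_{\eta_1}|$ scales like $\sqrt{\eta_1}$). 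This yields exactly $-\tfrac{\lambda}{m}\E_x(\E_{X_\eta}(\rho\un_{X_\rho=y}))$, matching one of the right-hand-side terms of the lemma.

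For the zeroth-order Taylor piece, a second application of strong Markov gives $\E_x(e^{-\lambda\eta_1/m}\P_{X_{\eta_1}}(X_\rho=y)\un_{X_{\eta_1}\le m^{1/2+\varepsilon}})=\E_x(e^{-\lambda\eta_1/m}\un_{X_{\rho_1}=y,\,X_{\eta_1}\le m^{1/2+\varepsilon}})$. Subtracting $\P_x(X_{\rho_1}=y)$, adding $\tfrac{\lambda}{m}\E_x(\eta\un_{\eta\le m,\,X_{\rho_1}=y})$ from the left-hand side of the lemma, and discarding the $\{X_{\eta_1}>m^{1/2+\varepsilon}\}$ piece as negligible, I would be left with evaluating
\begin{align*}
-\E_x\bigl((1-e^{-\lambda\eta_1/m})\un_{X_{\rho_1}=y}\bigr)+\tfrac{\lambda}{m}\E_x\bigl(\eta\un_{\eta\le m,\,X_{\rho_1}=y}\bigr)+o(\ox/m).
\end{align*}
Writing the first term via Fubini as $\int_0^{+\infty}\tfrac{\lambda}{m}e^{-\lambda t/m}g(t)\,dt$ and the second via the layer-cake identity $\E(\eta\un_{\eta\le m,\cdot})=\int_0^m g(t)\,dt-m\,g(m)$, with $g(t):=\P_x(\eta>t,X_{\rho_1}=y)$, the change of variable $s=t/m$ gathers the three pieces into $\lambda\int_0^1(1-e^{-\lambda s})g(ms)\,ds-\lambda\int_1^{+\infty}e^{-\lambda s}g(ms)\,ds-\lambda g(m)$. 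Plugging in the sharp asymptotic $g(ms)\sim\tfrac{8\ox}{\pi ms}\P_\infty(X_\rho=y)$ (valid for $s$ bounded away from $0$) and using the standard identities $\int_0^\lambda\tfrac{1-e^{-u}}{u}\,du=E_1(\lambda)+\gamma+\log\lambda$ and $\int_\lambda^{+\infty}\tfrac{e^{-u}}{u}\,du=E_1(\lambda)$, the bracket collapses to $\gamma+\log\lambda-1$, giving precisely $\tfrac{\lambda}{m}\tfrac{8\ox}{\pi}(\gamma-1+\log\lambda)\P_\infty(X_\rho=y)+o(\ox/m)$; the ``$-1$'' is nothing other than the boundary term $m\,g(m)\sim\tfrac{8\ox}{\pi}\P_\infty(X_\rho=y)$.

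The hard part will be establishing the sharp joint asymptotic $\P_x(\eta>t,X_{\rho_1}=y)\sim\tfrac{8\ox}{\pi t}\P_\infty(X_\rho=y)$ uniformly in $\ox=o(\sqrt m)$, with an error term fine enough to survive integration against $dt/t$ up to $t\asymp m$. The constant $8/\pi$ should come from the $1/t$ survival probability of a two-dimensional simple random walk in the positive quadrant, which itself follows from the rotation $(X^1\pm X^2)/\sqrt2$ into two independent one-dimensional simple random walks combined with the classical tail $\P_k(\tau_0>n)\sim k\sqrt{2/(\pi n)}$; the joint version further requires coupling this survival to the spatial convergence $\P_z(X_\rho=y)\to\P_\infty(X_\rho=y)$ as $\oz\to\infty$, with a rate sufficient to survive the logarithmic integration. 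Managing this uniformity together with the several $o(\ox/m)$ remainders (the Taylor remainder of $\Phi$, the tail of $X_{\eta_1}$, and the replacement $e^{-\lambda\eta_1/m}\to 1$) is where the technical work concentrates.
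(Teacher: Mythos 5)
Your proposal is correct in its overall strategy, but the route differs from the paper's in a genuinely interesting way, especially in the final computation that produces the constant $\gamma-1+\log\lambda$. Both proofs start the same way (strong Markov at $\eta_1$, Taylor-expand $\Phi(z)=\E_z(e^{-\lambda\rho/m}\un_{X_\rho=y})$ to first order in $\lambda/m$, and control the $\E_z(\rho\un_{X_\rho=y})$ contribution), but the paper then integrates by parts the Laplace transform of $\eta$, discretizes the resulting integral into a Riemann sum $\sum_i\P_x(\eta>i,X_\eta=z)e^{-\lambda i/m}$, and extracts $\gamma$ from partial harmonic sums after splitting the ranges $i\lessgtr b_m^2$ and $\oz\lessgtr m^{1/2-\varepsilon}$. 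You instead feed everything through the exact Fubini and layer-cake identities, reducing the residual to a single tail function $g(t)=\P_x(\eta>t,X_{\rho_1}=y)$, and read off $\gamma-1+\log\lambda$ from the classical identity $\int_0^{\lambda}\frac{1-e^{-u}}{u}\,du=\gamma+\log\lambda+E_1(\lambda)$ together with the boundary term $-\lambda g(m)$. That is cleaner and more transparent: the special constant appears for a conceptually clear reason instead of emerging from cancellation of $\log m$, $\log b_m^2$, and $\sum_{i\le b_m^2} i^{-1}$ as in the paper. The cost is that you must establish the sharp uniform asymptotic $g(t)\sim\frac{8\ox}{\pi t}\P_\infty(X_\rho=y)$ with enough control to integrate, which is essentially what the paper's estimates \eqref{estimate} and \eqref{estimate2} deliver; you correctly identify this as the hard part.

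Two technical gaps in your write-up deserve attention, though neither is fatal. First, when you ``drop the indicator on $X_{\eta_1}$'' in the first-order Taylor piece, the argument ``the event has probability $O(\ox/m^{1+2\varepsilon})$'' is not sufficient because the integrand $\E_{X_\eta}(\rho\un_{X_\rho=y})$ is unbounded (it grows like $\oX_\eta$); you actually need the moment estimate $\E_x\bigl(\oX_\eta\un_{\oX_\eta>m^{1/2+\varepsilon}}\bigr)=O(\ox\,m^{-1/2-\varepsilon})=o(\ox)$, which does hold via \eqref{locallimitX} but is a different statement than a probability bound. (The same distinction underlies the paper's lengthier path-counting bound $\E_z(\rho\un_{X_\rho=y})\le C\P_z(X_\rho=y)m^\beta$, which your route circumvents thanks to the crude $\E_z(\rho)\lesssim\oz$ bound combined with the joint law of $(\eta,X_\eta)$.) Second, the asymptotic $g(ms)\sim\frac{8\ox}{\pi ms}\P_\infty$ fails near $s=0$ (one needs $\ox=o(\sqrt{ms})$, so it breaks for $s\lesssim\ox^2/m$); you should truncate at $s=\delta_m$ with $\ox^2/m=o(\delta_m)$ and $\delta_m\to 0$ and bound the remainder using $(1-e^{-\lambda s})\le\lambda s$ together with $\int_0^{\delta_m}s\,g(ms)\,ds=O(\ox\delta_m/m)=o(\ox/m)$, which follows from the local limit estimate \eqref{lleta}. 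With those two points filled in, your argument is a valid and arguably more elegant alternative to the paper's proof.
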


\noindent In order to apply the previous lemma to prove Proposition \ref{lem2.3}, we need the following result which states that during the first $m$ excursions, $(X_{\rho_i})_{i \leq m}$ and $(X_{\eta_i})_{i \leq m}$ cannot respectively exceed $m^{1/2-\varepsilon}$ and $m^{1/2+\varepsilon}$ for any small positive $\varepsilon$.

\begin{lem} \label{uniform} 
Let $\varepsilon>0$ and introduce 
\begin{align}
\mathcal{B}_m^{c}:=\left\{\exists i\le m, \oX_{\rho_i} > m^{1/2-\varepsilon} \mbox{ or }\oX_{\eta_i} > m^{1/2+\varepsilon}\right\}.\label{B_n}
\end{align}
Then, for $\varepsilon>0$ small enough:  
\[\lim_{m\rightarrow+\infty}\P( \mathcal{B}_m^{c}) \label{Event1}  = 0. \]   
\end{lem}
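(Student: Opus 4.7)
The plan is to split the event by a union bound,
\begin{equation*}
\P(\mathcal{B}_m^c) \le \P\Bigl(\exists i\le m,\ \oX_{\eta_i} > m^{\nicefrac{1}{2}+\varepsilon}\Bigr) + \P\Bigl(\exists i\le m,\ \oX_{\rho_i} > m^{\nicefrac{1}{2}-\varepsilon}\Bigr),
\end{equation*}
and to treat the two terms separately. In both cases I would first localise the time horizon: by Proposition \ref{prop31}, $\rho_m/(m\log m) \to 1/\co$ in probability, so the event $\{\rho_m \le T_m\}$ with $T_m := 2 m\log m/\co$ has probability tending to $1$, and on it every $\eta_i$ and $\rho_i$ with $i \le m$ is $\le T_m$.

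For the first (larger-threshold) term, on $\{\rho_m\le T_m\}$ we have $\max_{i\le m}\oX_{\eta_i} \le \max_{k\le T_m}\oX_k$. Each coordinate process $(X_k^{(j)})_k$ is a martingale on $K$ and on the perpendicular axis, and a supermartingale toward $0$ on the parallel axis, so a direct computation gives $\E[(X_k^{(j)})^2] \le C k$. A standard maximal-inequality argument (Doob's $L^2$ inequality applied to the Doob--Meyer decomposition of $(X_k^{(j)})^2$) then yields
\begin{equation*}
\P\Bigl(\max_{k\le T_m}\oX_k > m^{\nicefrac{1}{2}+\varepsilon}\Bigr) = O\!\left(\frac{\log m}{m^{2\varepsilon}}\right) \xrightarrow[m\to\infty]{} 0.
\end{equation*}

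The second (smaller-threshold) term is more delicate, since $m^{\nicefrac{1}{2}-\varepsilon}$ is below the diffusive scale $\sqrt{T_m}$ and a maximal inequality on the whole trajectory is insufficient. Here I would exploit the strong Markov property at $\eta_i$: conditionally on $X_{\eta_i}=z$, $X_{\rho_i}$ is distributed as $X_{\rho_1}$ under $\P_z$. Starting from $z\in\partial K$, the walk on $K^c$ has a very strong inward drift (the transition from $(j,0)$ with $j\ge 1$ is $(j-1,0)$ with probability $1-3/(4j^\alpha)$) together with a small site-dependent killing probability $1/(2j^\alpha)$; when killed at $(j,0)$ it jumps to $(j,\pm 1)\in K$, so $\oX_{\rho_1}$ equals the norm of the killing site. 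Bounding the expected number of visits of the axis walk to each level $j$ by a constant via the drift, the sum of the killing probabilities above $R$ yields
\begin{equation*}
\P_z\bigl(\oX_{\rho_1} > R\bigr) \le \frac{C}{R^{\alpha-1}},\qquad R\ge 1,
\end{equation*}
uniformly in $z$. Combining this with the first-term bound via the strong Markov property,
\begin{equation*}
\P\bigl(\exists i\le m,\ \oX_{\rho_i} > m^{\nicefrac{1}{2}-\varepsilon}\bigr) \le \P\bigl(\exists i\le m,\ \oX_{\eta_i} > m^{\nicefrac{1}{2}+\varepsilon}\bigr) + \frac{Cm}{m^{(\nicefrac{1}{2}-\varepsilon)(\alpha-1)}},
\end{equation*}
and the second term tends to $0$ provided $(\nicefrac{1}{2}-\varepsilon)(\alpha-1)>1$. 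Since $\alpha>3$, this holds for $\varepsilon>0$ small enough, and this is precisely where the hypothesis $\alpha>3$ enters.

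The main obstacle will be the uniform axis estimate $\P_z(\oX_{\rho_1}>R) \lesssim R^{-(\alpha-1)}$ in the regime $\oz > R$: one must show that the walk starting above $R$ descends through level $R$ without being killed at any site of norm $>R$, which calls for a careful analysis of the biased random walk on $K^c$ with site-dependent killing, for instance via a ladder-variable decomposition controlling the expected occupation times of each level.
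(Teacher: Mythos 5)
Your decomposition by union bound and your treatment of the $\rho_i$-term are sound and close in spirit to the paper: the uniform polynomial tail $\P_z(\oX_{\rho_1}>R)\lesssim R^{-(\alpha-1)}$ you conjecture is in fact already available in the paper as a direct consequence of \eqref{upperbound1} (equivalently \eqref{lem4.6paper1}), so the ladder-variable analysis you anticipate as the ``main obstacle'' is not needed; summing over $i\le m$ and requiring $(\nicefrac{1}{2}-\varepsilon)(\alpha-1)>1$ (hence $\alpha>3$) is exactly the paper's bound for the term it calls $I$.

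The genuine gap is in your treatment of the $\eta_i$-term. You propose to localise the time horizon via Proposition~\ref{prop31} and then apply ``Doob's $L^2$ inequality to the Doob--Meyer decomposition of $(X_k^{(j)})^2$.'' But $(X_k^{(j)})^2$ is not a submartingale: its predictable compensator increases in the cone (quadratic variation $\approx \nicefrac{1}{2}$ per step) and on the perpendicular axis, yet \emph{decreases} on the parallel axis because of the strong inward drift, so the Doob--Meyer predictable part is not monotone and Doob's maximal inequality does not apply as stated. Nor is $|X_k^{(j)}|$ globally a super- or submartingale (martingale in the cone, supermartingale on one axis, submartingale at $0$ on the other), and there is no obvious monotone coupling with a free SRW coordinate because the holding probabilities differ across regions. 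Making a pathwise maximal bound rigorous would require a separate argument, e.g.\ isolating the bounded-increment martingale part of $X^{(j)}$ and showing the drift part cannot increase $|X^{(j)}|$, which is not immediate. The paper avoids all of this by bounding each excursion exit in terms of its entry: on the good event $\oX_{\rho_{i-1}}\le m^{\nicefrac{1}{2}-\varepsilon}$ one applies the per-excursion exit tail $\P_x(\oX_\eta>\oy)\le C\ox/\oy^2$ (fact \eqref{remark5.2paper1}) and the uniform moment bound $\E[\oX_{\rho_{i-1}}]\le c$ (fact \eqref{lem4.6paper1}), giving $\sum_{i\le m} C\,\E[\oX_{\rho_{i-1}}]\,m^{-(1+2\varepsilon)}\le C'm^{-2\varepsilon}$ with no maximal inequality and no time localisation. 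I would recommend replacing the global maximal-inequality step with this per-excursion bound.
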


\begin{proof}
First, note that: 
\begin{align*}
\P( \mathcal{B}_m^{c})\le \sum_{i=1}^m\P\Big( \oX_{\rho_i} > m^{1/2-\varepsilon}\Big)+\P\Big(\oX_{\eta_i}> m^{1/2+\varepsilon},  \oX_{\rho_{i-1}}\le m^{\nicefrac{1}{2}-\varepsilon}\Big)=:I+II.
\end{align*}
According to \eqref{lem4.6paper1}, there exists $c^\prime>0$ such that for all $i$ and all $x\in\partial K$, 
$\P(X_{\rho_i}=x)\le \nicefrac{c^\prime}{\ox^\alpha}$ implying that for $m$ large enough:
\begin{align*}
I\le \sum_{i=1}^m\frac{8(\alpha-1)^{-1}c^\prime}{m^{(\nicefrac{1}{2}-\varepsilon)(\alpha-1)}}=\frac{8(\alpha-1)^{-1}c^\prime}{m^{(\nicefrac{1}{2}-\varepsilon)(\alpha-1)-1}},
\end{align*}
which tends to 0 when $m$ goes to infinity for $\varepsilon$ small enough as $\alpha>3$.\\ 
Using \eqref{remark5.2paper1} and \eqref{lem4.6paper1}, there exists two positive constants $C$ and $C^\prime$ such that: 
\begin{align*}
II&= \sum_{i=1}^{m}\E\Big[\mathds{1}_{\oX_{\rho_{i-1}}\le m^{\nicefrac{1}{2}-\varepsilon}}\P_{X_{\rho_{i-1}}}\Big(\oX_{\eta}> m^{1/2+\varepsilon}\Big)\Big]\\
&\le C\sum_{i=1}^{m}\E\Big[\mathds{1}_{\oX_{\rho_{i-1}}\le m^{\nicefrac{1}{2}-\varepsilon}}\frac{\oX_{\rho_{i-1}}}{m^{1+2\varepsilon}}\Big]
\le \frac{C}{m^{1+2\varepsilon}}\sum_{i=1}^{m}\E\Big[\oX_{\rho_{i-1}}\Big]\le \frac{C^\prime}{m^{2\varepsilon}},
\end{align*}
which tends to 0 when $m$ goes to infinity.
\end{proof}

\noindent     We will now proceed to prove Proposition  \ref{lem2.3}. Note that we do so for the case  $d =1$, as the reasoning for the general case follows similarly. According to Lemma \ref{uniform}, we can apply Lemma \ref{leLem} to $F_m(X_{\rho_{j-1}},X_{\rho_j})$ and as a result, uniformly for $j\le m$, in probability: 
\begin{align*}
F_m(X_{\rho_{j-1}},X_{\rho_j}) 
&=:\frac{\lambda}{m}\widetilde F(X_{\rho_{j-1}},X_{\rho_j})+o\Big(\frac{\oX_{\rho_{j-1}}}{m}\Big),
\end{align*}
where 
\begin{align*}
\widetilde F(x,y)=\frac{1}{\P_x(X_{\rho_1}=y)}\left(-\E_x(\E_{X_{\eta}}(\rho\un_{X_{ \rho}=y} )+\frac{8\ox}{\pi}(\gamma-1+\log \lambda)\P_\infty(X_\rho=y)\right).
\end{align*}
In lemma 3.2 in \cite{AndDeb3}, we have proved that $(X_{\rho_j})_{j\in\mathbb N}$ is positive recurrent with invariant probability measure $\pi^{\dag}$. Thus, assuming that we can apply the Birkhof ergodic theorem to the function $\widetilde F$, if $\pun$ is the kernel of the Markov chain $(X_{\rho_j})_{j\in\mathbb N}$, we have in probability:
\begin{align}
\lim_{m\rightarrow+\infty}\frac{1}{m}\sum_{i=1}^m\widetilde F&(X_{\rho_{j-1}},X_{\rho_j})
\sum_{x\in \partial K} \sum_{y\in \partial K } \pi^{\dag}(x) \pun (x,y)\widetilde F(x,y)\nonumber\\
&=- \sum_{x\in\partial K} \pi^{\dag}(x) {\E_{x}\big( \E_{X_{\eta}}( \rho ) \big)}+\cst  \E(\pi^{\dag})(\gamma-1+ \log \lambda)  \label{sumergo} .
\end{align}

The existence of two positive constants $C_0$ and $C_1$ such that $\E_{X_{\eta}}( \rho ) \leq C_0 \oX_{\eta} $ (see Lemma 4.8 in \cite{AndDeb3}), and $\E_x(\oX_{\eta}) \leq C_1 \ox$ (see \cite{McConnell} Theorem 1.3 page 223) ensures that the sum in \eqref{sumergo} is bounded above by $\E[\pi^\dag]$ up to a positive constant which is finite according to Lemma 3.2 in \cite{AndDeb3} and justifies the utilization of the Birkhof ergodic theorem. To conclude, we need to account for the term $o\left(\nicefrac{\oX_{\rho_{j-1}}}{m}\right)$ in the expression of $F_m$, but again, the Birkhoff ergodic theorem ensures that $\frac{1}{m} \sum_{i=1}^m \oX_{\rho_{j-1}}$ converges almost surely to a constant.  
So we obtain \eqref{ergo} recalling that $(\co)^{-1}=\nicefrac{8  \E(\pi^{\dag})}{\pi}$.
\newline

\noindent We are now ready to prove Corollary \ref{corrhom}.
%\newline
%\begin{proof} of Corollary \ref{corrhom} 

\noindent We first check that $\sum_{j=1}^{m} A_m(X_{\rho_{j-1}},X_{\rho_{j}})-a_m$ is small,  this actually comes from the first order, that is the law of large numbers of $\rho_m$ obtained in \cite{AndDeb3}. Indeed  in the proof of Lemma 3.3 in \cite{AndDeb3}, it is proved that for any $d>0$  
\begin{align}
\E(e^{-\lambda \rho_{dm}/m})=\E(e^{-\lambda d \log m (\rho_{dm} /(d m \log m)) })=  e^{-\frac{\lambda d}{\cun} \log m +O(1) }, \label{papier1}
\end{align}
using a reasoning similar to that used in the proof of Proposition \ref{lem2.3}:
\begin{align}
 \E(e^{-\lambda \rho_{dm} /m})
&= \E\Big(e^{-\frac{\lambda d}{d m} \sum_{j=1 }^{d m} A_{m} (X_{\rho_{j-1}},X_{\rho_{j}}) +O(1) }\Big).  \label{papier2}
\end{align}
Identifying \eqref{papier1} and \eqref{papier2} implies that 
\begin{align}
  \sum_{j=1 }^{dm} A_{m} (X_{\rho_{j-1}},X_{\rho_{j}})  
 &=_{\P}  \frac{d}{\cun}  m\log m+o(m), \label{Aconv}
\end{align}
where $=_{\P}$
 means that the equality holds with  probability tending to one when $m$ goes to infinity, taking $d=1$, we obtain what we seek.  
Writing: 
\begin{align*}
    \rho_m-a_m &= \Big(\rho_m-\sum_{j=1}^m A_{m}(X_{\rho_{j-1}},X_{\rho_{j}})\Big) + \Big(\sum_{j=1}^m A_{m}(X_{\rho_{j-1}},X_{\rho_{j}})-a_m\Big),
\end{align*} 
and using \eqref{CondLap}, we have: 
 \begin{align*}
\E(e^{-\frac{\lambda}{m}(\rho_m-a_m)})=\E\Big(e^{\sum_{j=1}^mF_m(X_{\rho_{j-1}},X_{\rho_j})-\frac{\lambda}{m}(\sum_{j=1}^mA_m(X_{\rho_{j-1}},X_{\rho_j})-a_m)}\Big).
 \end{align*}
Formula \eqref{Aconv} and Proposition \ref{lem2.3} ensure that the quantity in the exponential above converges to $\mathbf{S_1}(\lambda)$ when $m$ goes to infinity in probability, and as a result in distribution. Consequently, we have our corollary as: 
\[\lim_{m\rightarrow+\infty}\E(e^{-\frac{\lambda}{m}(\rho_m-a_m)})=e^{\mathbf{S_1}(\lambda)}.\]
%\end{proof}

\noindent We finish this section with the following remark which is a particular case of the preceding corollary and will be used in the proof of Theorem \ref{The1}. Let us introduce the positive sequence $(b_n)_{n\ge 2}$ defined by:
\begin{align}\label{defbninutile}
b_n^2=\frac{n}{\log n}+\frac{n \log \log n}{(\log n)^2}.
\end{align}
It follows that when $n$ goes to infinity: 
\begin{align}  
b_n^2 \log b_n^2 =n+o\left(\frac{n}{\log n}\right), \label{defbn} 
\end{align}
and note that necessarily $b_n^2=n/\log n +o(n/\log n)$.

\begin{rem} \label{rem2.3} In the sequel, in order to simplify notations, we proceed as if $\co b_n^2$ were an integer. For any $\varepsilon>0$, introduce the event $\mathcal{G}_n$ defined by
\begin{align}
\Big\{ \Big| \frac{\lambda}{b_n^2} \Big(n- \sum_{j=1 }^{\co b_n^2} A_{b_n^2}(X_{\rho_{j-1}},X_{\rho_{j}})\Big)-\sum_{j=1 }^{\co b_n^2} F_{b_n^2}(X_{\rho_{j-1}},X_{\rho_{j}}) +\cun \mathbf{S_1}(\lambda)\Big|\leq \varepsilon \Big\}, 
\label{Gn}
\end{align}
we have $\lim_{n \rightarrow + \infty}\P(\mathcal{G}_n)=1$. Indeed, replacing {$m$ by $b_n^2$} and $d$ by $\cun$ in \eqref{Aconv}, we have:
\begin{align*}
 \sum_{j=1 }^{\co b_n^2} A_{ b_n^2} (X_{\rho_{j-1}},X_{\rho_{j}})&
 =_{\P} \frac{\co}{\co}  b_n^2   \log (b_n^2)+o(b_n^2)  =n+o(b_n^2)
\end{align*}
where the last inequality comes from the definition of $b_n^2$. Then the result follows by applying Proposition \ref{lem2.3}.
\end{rem}

\subsection{Proof of Theorem \ref{SONn}}

Let $a>0$, $h>0$ and introduce $h_n:= h b_n^2 (\log n)^{-1}$. %\sim hn(\log n)^{-2}$. 
First, we decompose the probability in \eqref{thm2eq2} with respect to the values of $\eta_{N_n+1}$ and introduce the event $\mathcal{G}_n$ defined in \eqref{Gn} which depends only on $\{X_{\rho_1},\cdots,X_{\rho_{\co b_n^2}}\}$:
\begin{align*}
&\P( n-\rho_{N_n} \geq a b_n^2, N_n- \co b_n^2\geq h_n)\\
&=\P( n-\rho_{N_n} \geq a b_n^2,N_n- \co b_n^2\geq h_n, \eta_{N_n+1}>n,\mathcal{G}_n)\\
&+\P( n-\rho_{N_n} \geq a b_n^2,N_n- \co b_n^2\geq h_n, \eta_{N_n+1}\leq n,\mathcal{G}_n)+o(1)=:P_1+P_2+o(1). 
\end{align*} 
The event in $P_1$ means that, after the instant $\rho_{N_n}$, the random walk will not leave the cone before the instant $n$. In contrast, the event in $P_2$ indicates that it will exit the cone at some point and remain on the axis until the instant $n$.\\
We first deal with the main contribution $P_1$. 
Note that according to lemma \ref{uniform}, as $N_n\le n$, we have:
\begin{align*}
P_1&=\P( n-\rho_{N_n} \geq a b_n^2,N_n- \co b_n^2\geq h_n, \eta_{N_n+1}>n,X_{\rho_{N_n}}\le n^{\nicefrac{1}{2}-\varepsilon},\mathcal{G}_n)+o(1)\\
&=:P_1^\prime+o(1). 
\end{align*}
with: 
\begin{align*}
P_1^\prime
&=\sum_{i-\co b_n^2 \geq h_n }\sum_{x,\ox\le n^{\nicefrac{1}{2}-\varepsilon}} \sum_{m \geq  a b_n^2  } \P(X_{\rho_i}=x, n-\rho_i=m,{\mathcal{G}_n})\P_x(\eta >m)\\
&=\frac{8}{\pi}   \sum_{m \geq  a b_n^2  } \sum_{\ox \leq n^{1/2- \varepsilon}} \frac{\ox}{m}  \sum_{i- \co b_n^2 \geq h_n} \P(X_{\rho_i}=x, n-\rho_i=m,{\mathcal{G}_n}) +o(1)
\end{align*}
where the last equality comes from  \eqref{lleta2} as indeed $m\ge ab_n^2$ implies $\ox=o(m^{\nicefrac{1}{2}})$.\\
Note that, throughout the calculations, $i$ is bounded above by $n$, but to avoid complicating the notation, we will not write this out in the rest of the text.\\
One can see that, using \eqref{upperbound1}, there exists a positive constant $C$, such that:
\begin{align*}
& \sum_{i- \co b_n^2 \geq h_n}  \sum_{\ox > n^{1/2- \varepsilon}} \ox\P(X_{\rho_i}=x)\sum_{m\ge ab_n^2}\frac{1}{m}\P(n-\rho_i=m,|X_{\rho_i}=x, )\\
 &\le\frac{1}{ab_n^2} \sum_{i- \co b_n^2 \geq h_n} \sum_{\ox > n^{1/2- \varepsilon}}\frac{c_+}{\ox^{\alpha-1}}\le \frac{C\log n}{n^{(\nicefrac{1}{2}-\varepsilon)(\alpha-1)-1}}
\end{align*}
which tends to 0 for $\varepsilon$ small enough when $n$ goes to infinity as $\alpha>3$. Consequently, we can write:
\begin{align}\label{dmun} 
P_1^\prime=\frac{8}{\pi}\sum_{m \geq  a b_n^2 }  \frac{1}{m}  \E\Big[ \sum_{i-\co b_n^2 \geq h_n} \oX_{\rho_i} \E\big(\un_{ n-\rho_i=m}\big|(X_{\rho_j},j \leq i)\big) \un_{{\mathcal{G}_n}} \Big] +o(1).
\end{align}

\noindent Then, the main idea to evaluate $P_1'$, is to prove that for large $n$, under $\mathcal{G}_n$, the above second sum can be approximated in probability by a certain (random) measure. Then we determine the mean of this measure. 

\noindent For this purpose, we introduce the random measure $\mu^h_{n,X}$ and its mean $\mu_n^h$, defined for any $s>0$ by:
\begin{align}
    \mu^h_{n,X}([0,s]) &:= {\frac{8}{\pi}} \frac{(\log n)^2}{n}\sum_{i-\co b_n^2 \geq h_n} \oX_{\rho_i} \E(\un_{ 0 \leq n-\rho_i \leq s b_n^2 }|(X_{\rho_j},j \leq i))\un_{{\mathcal{G}_n}}, \nonumber \\
    \mu_n^h([0,s])& :=\E(\mu^h_{n,X}(s)) \label{Mesure},
\end{align}
To simplify the notations we write $ \mu^h_{n,X}(s)$ (resp. $\mu_n^h(s)$) instead of $\mu^h_{n,X} $$([0,s])$ (resp. $\mu_n^h([0,s])$).

So, we first obtain (in probability) an estimate of the conditioned Laplace transform of the above conditional measure. For this, we will use, in particular, Remark \ref{rem2.3}. This will lead to the vague convergence of $\mu_n^h$ to the measure $\mu_{\mathcal{S}_1^h}$, for which we have an explicit log-Laplace transform (see  \eqref{LapS_1}). 
We summarize this convergence in the following proposition, which we assume to be true for the moment:

\begin{prop} \label{Vague} For any $h\geq 0$, $ \mu_n^h$ converges vaguely when $n$ goes to infinity to $\mu_{\mathcal{S}_1^h}$, where $\mu_{\mathcal{S}_1^h}$ is characterized by its $\log$-Laplace transform (see \eqref{LapS_1}).
\end{prop}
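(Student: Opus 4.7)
The plan is to establish vague convergence by proving pointwise convergence of Laplace transforms $\hat\mu_n^h(\lambda):=\int_0^\infty e^{-\lambda s}\,\mathrm d\mu_n^h(s)$ for $\lambda>0$. By Fubini applied to \eqref{Mesure},
\begin{align*}
\hat\mu_n^h(\lambda)=\frac{8(\log n)^2}{\pi n}\,\E\Bigl[\un_{\mathcal G_n}\!\!\sum_{i\ge \co b_n^2+h_n}\!\!\oX_{\rho_i}\,\E\bigl(e^{-\lambda(n-\rho_i)/b_n^2}\un_{\rho_i\le n}\,\big|\,\mathcal F_\infty\bigr)\Bigr]
\end{align*}
with $\mathcal F_\infty:=\sigma(X_{\rho_j}:j\ge 0)$, and the task is to show this converges to $e^{\log\hat\mu_{\mathcal S_1^h}(\lambda)}$ prescribed by \eqref{LapS_1}. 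The strategy is to split the index range by setting $i_0:=\co b_n^2+h_n$ and writing $i=i_0+k$, $k\ge 0$: the initial chunk $\rho_{i_0}$ will be handled by an extension of Corollary~\ref{corrhom}, and the incremental part $\rho_{i_0+k}-\rho_{i_0}$ by a renewal/ergodic argument.

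For the initial chunk I would extend Corollary~\ref{corrhom} to upper index $i_0$ by reapplying Proposition~\ref{lem2.3} with $d=\co+h/\log n$: the extra $h_n$ excursions contribute $hb_n^2/\co$ to $\sum A_{b_n^2}$ via \eqref{Aconv}, and their contribution to $\sum F_{b_n^2}$ vanishes because $h_n/b_n^2=h/\log n\to 0$. Combined with the conditional representation \eqref{CondLap}, this gives the distributional convergence of $(\rho_{i_0}-n-hb_n^2/\co)/b_n^2$ to a 1-stable-like random variable $\mathcal S_1^0$ whose log-Laplace satisfies $\mathbf{S_1^0}(\lambda):=-\lambda\log\co+\mathbf{S_1}(\co\lambda)$, after accounting for the $b_n^2$ versus $\co b_n^2$ rescaling. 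For the incremental part, the strong Markov property at $\rho_{i_0}$ together with Proposition~\ref{prop31} yields $\rho_{i_0+k}-\rho_{i_0}=k(\log n)/\co+o(b_n^2)$ in probability, uniformly over $k=O(b_n^2/\log n)$ (the relevant range, since otherwise $\rho_i>n$ and the indicator kills the contribution). Reparameterizing $t:=k(\log n)/(\co b_n^2)$ turns the sum over $k$ into a Riemann sum with step $(\log n)/(\co b_n^2)\to 0$, while the weights $\oX_{\rho_{i_0+k}}$ average to $\E(\pi^\dag)$ by the ergodic theorem for the positive-recurrent chain $(X_{\rho_j})$ (Lemma~3.2 of \cite{AndDeb3}). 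Using the identity $8\co\E(\pi^\dag)/\pi=1$ from Proposition~\ref{prop31}, the full prefactor $8(\log n)^2/(\pi n)\cdot\co b_n^2\E(\pi^\dag)/\log n$ collapses to $1$.

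Evaluating the resulting $t$-integral in closed form (where the truncation $\un_{\rho_i\le n}$ becomes $\un_{\mathcal S_1^0+h/\co+t\le 0}$ in the limit) and taking the outer expectation over $\mathcal S_1^0$ identifies the limit with $e^{\log\hat\mu_{\mathcal S_1^h}(\lambda)}$ via the algebraic identity $\log\hat\mu_{\mathcal S_1^h}(\lambda)=\mathbf{S_1^0}(\lambda)-\lambda h/\co-\log\lambda$ which is readily verified from \eqref{Stable_1} and \eqref{LapS_1}. The two main obstacles I anticipate are: (i) establishing the uniform-in-$k$ second-order renewal estimate for $\rho_{i_0+k}-\rho_{i_0}$ at $b_n^2$-scale precision (Proposition~\ref{prop31} only gives the leading order, so refining this requires combining the conditional Laplace representation \eqref{CondLap} with ergodic averaging in the spirit of the proof of Proposition~\ref{lem2.3}); and (ii) controlling the exchange of the infinite sum over $k$ with the outer expectation, for which the event $\mathcal G_n$ (whose probability tends to one) together with the uniform bounds of Lemma~\ref{uniform} are the essential tools.
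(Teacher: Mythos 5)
Your high-level plan overlaps substantially with the paper's: both pass to the Laplace transform, both split the index range at $i_0=\co b_n^2+h_n$, both invoke the conditional representation \eqref{CondLap} and the machinery behind Proposition~\ref{lem2.3}/Corollary~\ref{corrhom}, both average $\oX_{\rho_i}$ via ergodicity of $(X_{\rho_j})_j$, and your algebraic identity $\mathbf{S_1^0}(\lambda)=\cun\mathbf{S_1}(\lambda)$ is correct and consistent with \eqref{LapS_1}. The Riemann-sum/geometric-series step for the increment also matches the paper's $D_n$ in \eqref{lenouveautruc}.

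The genuine gap is in the treatment of the initial chunk. You propose to pass to the \emph{unconditional} distributional limit $(\rho_{i_0}-n-hb_n^2/\co)/b_n^2\Rightarrow\mathcal S_1^0$, carry that realized random variable $\mathcal S_1^0$ through the $t$-integral (so the truncation becomes $\un_{\mathcal S_1^0+h/\co+t\le 0}$), and then take an outer expectation over $\mathcal S_1^0$. This is not what the structure of $\mu^h_{n,X}$ supports: inside the sum sits a \emph{conditional} expectation given $\mathcal F_\infty=\sigma(X_{\rho_j})$, and the weights $\oX_{\rho_i}$ are $\mathcal F_\infty$-measurable, hence correlated with that conditional law. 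What the paper actually uses is that, \emph{on the event $\mathcal G_n$}, the conditional Laplace-transform exponent $\frac{\lambda}{b_n^2}(n-\sum_{j\le\co b_n^2}A)-\sum_{j\le\co b_n^2}F$ is within $\varepsilon$ of the deterministic constant $\cun\mathbf{S_1}(\lambda)$; the initial chunk therefore contributes a \emph{multiplicative constant} $e^{\cun\mathbf{S_1}(\lambda)}$, not a random variable to be integrated against. With the extra $h_n$ terms absorbed via $\Delta A$, the remaining sum over $i$ is the clean geometric series producing $e^{-\lambda h/\co}/\lambda$, and the limit is the product $e^{\cun\mathbf{S_1}(\lambda)}e^{-\lambda h/\co}/\lambda$. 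Your version, with a residual indicator $\un_{\mathcal S_1^0+h/\co+t\le 0}$ inside an outer expectation over $\mathcal S_1^0$, gives an expression of the form $\frac{1}{\lambda}\E\bigl((1-e^{\lambda(h/\co+\mathcal S_1^0)})\un_{\mathcal S_1^0\le -h/\co}\bigr)$, which is not the product above and is not readily identified with \eqref{LapS_1}. In short: you have conflated \emph{convergence of the conditional Laplace transform to a constant on $\mathcal G_n$} with \emph{distributional convergence of $\rho_{i_0}$}, and only the former lets the factor $e^{\cun\mathbf{S_1}(\lambda)}$ be pulled out of the sum over $i$.

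A second, smaller gap: your obstacle (i) on the increments is real, but the fix is not a generic ``refined renewal estimate.'' The paper uses the explicit decomposition into $\Delta A_{b_n^2}(i)$ and $\Delta F_{b_n^2}(i)$, shows $\Delta F_{b_n^2}(i)=O((i-\co b_n^2)/b_n^2)$ in probability (hence negligible against $\frac{\lambda}{b_n^2}\Delta A_{b_n^2}(i)$ once multiplied by $\log n$), and derives the exponential weight $e^{-\frac{\lambda}{\co}\frac{(\log n)^2}{n}(i-\co b_n^2)}$ directly from \eqref{Aconv}. It is this $\Delta A$-vs-$\Delta F$ comparison, not Proposition~\ref{prop31} alone, that gives the needed control uniformly over the relevant range of $i$.
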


\noindent  We deduce from this proposition the following convergence 
\begin{align*}
\lim_{n \rightarrow + \infty } P_1=\lim_{n \rightarrow + \infty } P_1'  = \int_a^{+ \infty} \frac{1}{s} \mathrm{d}\mu_{\mathcal{S}_1^h}(s). 
\end{align*}

\noindent Let us now prove the above Proposition and then we will show that the remaining probability $P_2$ is negligible.

\begin{proof} of Proposition \ref{Vague}. Let $\hat \mu^h_{n,(X_{\rho_k})_k}(\lambda)$, the Laplace transform of $\mu^h_{n,(X_{\rho_k})_k}$, following the same lines as in \eqref{CondLap}, it can be written as
\begin{align}
    & \hat \mu^h_{n,X}(\lambda)= \nonumber \\
    &  \cst \frac{(\log n)^2}{n} \sum_{i-\co b_n^2 \geq h_n } \oX_{\rho_i}  e^{\frac{\lambda }{b_n^2} (n-  \sum_{j\leq i} A_{b_n^2}(X_{\rho_{j-1}},X_{\rho_{j}})) -\sum_{j \leq i}F_{b_n^2}(X_{\rho_{j-1}},X_{\rho_{j}})  } \un_{\mathcal{G}_n}. \nonumber
\end{align}
with recall \eqref{Am} and \eqref{Fm}. As $i- \co b_n^2 \geq h_n$, we introduce 
the following random variables 
\begin{align*}
    \Delta F_{b_n^2}(i)  :=  \sum_{j=\co b_n^2+1}^{i}  F_{b_n^2}(X_{\rho_{j-1}},X_{\rho_{j}}),\, \Delta A_{b_n^2}(i)  :=\sum_{j=\co b_n^2+1}^{i}  A_{b_n^2}(X_{\rho_{j-1}},X_{\rho_{j}}).
\end{align*}
Then, focusing on the exponent of the exponential $\hat \mu^h_{n,X}(\lambda)$, as we work under $\mathcal{G}_n$ (see Remark \ref{rem2.3}), we can write: 
\begin{align*}
& \frac{\lambda}{b_n^2} ( n- \sum_{j\leq i} A_{b_n^2}(X_{\rho_{j-1}},X_{\rho_{j}})) -\sum_{j \leq i}  F_{b_n^2}(X_{\rho_{j-1}},X_{\rho_{j}})  \\
& =\frac{\lambda}{b_n^2} ( n-  \sum_{j=1 }^{\co b_n^2} A_{b_n^2}(X_{\rho_{j-1}},X_{\rho_{j}}))-\sum_{j=1 }^{\co b_n^2}  F_{b_n^2}(X_{\rho_{j-1}},X_{\rho_{j}})\\
&-\frac{\lambda}{b_n^2}\Delta A_{b_n^2}(i)-\Delta F_{b_n^2}(i)\\
&=\cun \mathbf{S_1}(\lambda)-\frac{\lambda}{b_n^2}\Delta A_{b_n^2}(i)-\Delta F_{b_n^2}(i)+o(1).
\end{align*}
So we are left to deal with 
\begin{align}
 {\frac{8}{\pi}} \frac{(\log n)^2}{n} \sum_{i-\co b_n^2 \geq h_n } \oX_{\rho_i}  e^{  -\frac{\lambda}{b_n^2}\Delta A_{b_n^2}(i)-\Delta F_{b_n^2}(i)}. \label{letruc}
\end{align}
We first focus on $\Delta F_{b_n^2}(i)$ and with a similar reasoning as the one for the proof of Proposition \ref{lem2.3}, one can see that for $n$ large enough:  
\begin{align}
\Delta F_{b_n^2}(i)=_\P\frac{i-\cun b_n^2}{b_n^2}S_1(\lambda)+o(1)\label{Delta1}.
\end{align} 
Now, we prove that  $\Delta F_{b_n^2}(i)$ produces actually a negligible contribution compared to $\frac{1}{b_n^2}\Delta A_{b_n^2}(i)$: using \eqref{Aconv}, we obtain for $n$ large enough, 
\begin{align}\label{Delta2}
\Delta A_{b_n^2}(i)=\sum_{j=\co b_n^2+1}^{i} A_{b_n^2}(X_{\rho_{j-1}},X_{\rho_{j}})=_{\P} (i-\co b_n^2)\frac{\log n}{\co}+o(i-\co b_n^2).
\end{align}

\noindent Recalling that $b_n^2  \sim \nicefrac{n}{\log n}$, when $n$ goes to infinity, \eqref{Delta1} and \eqref{Delta2} imply: 
\begin{align*}
\frac{\lambda}{b_n^2}\Delta A_{b_n^2}(i)+\Delta F_{b_n^2}(i)
&=_{\P}\frac{i-\co b_n^2}{b_n^2}\Big(\mathbf{S_1}(\lambda)+\frac{\lambda}{\co}\log n+o(1)\Big)+o(1)\\ 
&=\frac{\lambda(\log n)^2}{n\co}(i-\co b_n^2)(1+o(1)).
\end{align*}
As a result \eqref{letruc} can be approximated (in probability) to:
\begin{align}\label{lenouveautruc}
 D_n:=  {\frac{8}{\pi}} \frac{(\log n)^2}{n} \sum_{i-\co b_n^2 \geq h_n } \oX_{\rho_i}  e^{  - \frac{\lambda}{\co} \frac{(\log n)^2}{n} (i-\co b_n^2)} .
\end{align} 
Our aim at this step is to obtain the limit of $\E(D_n)$. As $(X_{\rho_i})_{i\ge 1}$ is positive recurrent and aperiodic, $\E(\oX_{\rho_i})$ converges to $\E(\pi^{\dag})$ and thus:
\begin{align*}
\lim_{n\rightarrow+\infty}\E(D_n)&=\frac{8}{\pi}\E(\pi^\dagger)\lim_{n\rightarrow+\infty}\frac{(\log n)^2}{n}\sum_{i-\co b_n^2 \geq h_n } e^{  -\frac{\lambda }{\co} \frac{(\log n)^2}{n} (i-\co b_n^2)}\\
&=\co\lim_{n\rightarrow+\infty}\frac{(\log n)^2}{n}\frac{e^{  -\frac{\lambda }{\co} \frac{(\log n)^2}{n} h_n}}{1-e^{  -\frac{\lambda }{\co} \frac{(\log n)^2}{n} }} =\frac{e^{-\frac{\lambda h}{\co}}}{\lambda}.
\end{align*}
Moving back to \eqref{letruc} we finish the proof by an argument of dominated convergence which yields : 
\begin{align*}
\lim_{n \rightarrow + \infty} \E\Big( {\frac{8}{\pi}} \frac{(\log n)^2}{n} \sum_{i-\co b_n^2 \geq h_n } \oX_{\rho_i}  e^{  -\frac{\lambda}{b_n^2}\Delta A_{b_n^2}(i)-\Delta F_{b_n^2}(i)}\Big) =  \frac{e^{-\frac{\lambda h}{\co}}}{{\lambda}}.
\end{align*}
\noindent Then collecting the above limit together with the arguments above \eqref{letruc}, we obtain 
\begin{align*}
\lim_{n\rightarrow+\infty}\hat \mu_{n}(\lambda)  
=\frac{e^{\cun \mathbf{S_1}(\lambda)} e^{-\frac{\lambda h}{\co}}}{\lambda}, 
\end{align*}
this finishes the proof of the Proposition \ref{Vague}.
\end{proof}

It remains to prove that $P_2$ (see page  8)  provides a negligible part. The main idea regarding how this leads to a small contribution is as follows: to end up on the axis without exiting it before time $n$, one must simultaneously undergo a significant fluctuation within the cone while ensuring that the remaining time on the axis (after this major excursion) is not greater than the norm of this fluctuation.
This actually occurs with low probability.\\
{First, note that according to lemma \ref{uniform}, as $\eta_{N_n+1}\le n$, we have:
\begin{align*}
P_2=\P( n-\rho_{N_n} \geq a b_n^2,N_n- \co b_n^2\geq h_n, \eta_{N_n+1}\leq n,\mathcal B_n,\mathcal{G}_n)+o(1)=:P_2^\prime+o(1).
\end{align*}
Similarly as for $P_1$:
\begin{align*}
P_2^\prime
=& \sum_{i-\co b_n^2 \geq h_n }\sum_{x,\ox \leq n^{1/2-\varepsilon}} \sum_{m \geq  {a b_n^2} }\P(X_{\rho_i}=x, n-\rho_i=m,\mathcal G_n) \\
& \sum_{y, \oy \leq n^{1/2+\varepsilon} } \sum_{u=1}^{m-1} \P_x(\eta =u,X_{\eta}=y)\P_y(\rho>m-u).
\end{align*}
Then we decompose $P_2^\prime$ into two parts $\Sigma_1$ and $\Sigma_2$, respectively when $m-u> b_n^{1+3\varepsilon}$ or not.\\
In the first case, as $\oy \leq n^{1/2+ \varepsilon}$, this implies that $\oy=o(b_n^{1+3\varepsilon})$ for $\varepsilon$ small enough, then by \eqref{lem4.7paper1}%if $n-k-u \geq b_n^{1+3\varepsilon} $
, for any $r>0$,  $\P_y(\rho>m-u) \leq b_n^{-(1+3\varepsilon)r}$ and therefore (as $i\le n$):
\begin{align*}
\Sigma_1
\le&\frac{1}{b_n^{(1+3\varepsilon)r}}\sum_{i-\co b_n^2 \geq h_n }\P(\oX_{\rho_i}\le n^{\nicefrac{1}{2}-\varepsilon}, n-\rho_i\ge ab_n^2,\mathcal G_n)
\le\frac{n}{b_n^{(1+3\varepsilon)r}},
\end{align*}
implying that $\Sigma_1$ goes to 0 when $n$ goes to infinity for $r$ large enough.\\
For $\Sigma_2$, as $u>ab_n^2-b_n^{1+3\varepsilon}$, $\ox=o(\sqrt u)$ for $\varepsilon$ small enough and according to \eqref{lleta}, $\P_x(\eta=u)\le \frac{16\ox}{\pi u^2}$. Consequently, for $n$ large enough: 
\begin{align*}
\Sigma_2\le&\sum_{i-\co b_n^2 \geq h_n }\sum_{x,\ox \leq n^{1/2-\varepsilon}} \sum_{m \geq  {a b_n^2} }\P(X_{\rho_i}=x, n-\rho_i=m) \sum_{u= m-b_n^{1+3\varepsilon} }^{m-1} \P_x(\eta =u) \\
\le&\frac{16}{\pi}\sum_{i-\co b_n^2 \geq h_n }\sum_{x,\ox \leq n^{1/2-\varepsilon}} \sum_{m \geq  {a b_n^2} }\P(X_{\rho_i}=x, n-\rho_i=m) \sum_{u= m-b_n^{1+3\varepsilon} }^{m-1} \frac{\ox}{u^2}\\
\le&\frac{16b_n^{1+3\varepsilon}}{\pi(ab_n^2-b_n^{1+3\varepsilon})^2}\sum_{i-\co b_n^2 \geq h_n }\sum_{x,\ox \leq n^{1/2-\varepsilon}} \sum_{m \geq  {a b_n^2} }\ox\P(X_{\rho_i}=x, n-\rho_i=m)\\
\le&\frac{32}{\pi a^2b_n^{3(1-\varepsilon)}}\sum_{i-\co b_n^2 \geq h_n }\E\Big[\oX_{\rho_i}\mathds{1}_{\frac{n-\rho_i}{b_n^2}\ge a,\mathcal G_n}\Big].
\end{align*}}

In the last expression, we recognize the measure introduced in \eqref{Mesure}, which implies that $\sum_{i-\co b_n^2 \geq h_n } $ $ \E(X_{\rho_i}\mathds{1}_{\rho_i \leq n-a b_n^2})$ behaves like $n/(\log n)^2$, implying that $\Sigma_2$ converges to zero when $n$ goes to $+\infty$.
Collecting these two estimates ($\sum_1$ and $\sum_2$), we obtain that $P_2$ converges to zero when $n$ goes to $+\infty$.

\section{Convergence of the main process}

In this section, we prove Theorem \ref{The1}. We start with a first section of preliminaries, then we study the limit of $\P(X_n \geq a b_n,X_n \in K)$, with $a=(a_1,a_2)$ and $a_1>0$ and $a_2>0$. 

\subsection{Preliminaries}

In this part, we prove with the following lemma that for $n$ large enough, the random walk $\mathbf X$ cannot be at the same time on the axis and far from the origin. 
\begin{lem} \label{lemneg}
If $e_i$ is a vector of the canonical basis, the probability of the event $\mathcal{A}_n:=\Big \{ \aXn[n] \geq b_ne_i ,X_n \in K^c \Big\}$ 
tends to 0 when $n$ goes to infinity.
\end{lem}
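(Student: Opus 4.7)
The plan is to decompose $\mathcal A_n$ according to the last entrance of the walk into the axis. On $\{X_n\in K^c\}$, the instant $\tau:=\eta_{N_n+1}$ satisfies $\tau\le n$, $X_\tau\in\partial K$, and the walk stays in $K^c$ throughout $[\tau,n]$. Enumerating over $i$ and applying the strong Markov property at the excursion times $\eta_i$, one obtains
\begin{align*}
\P(\mathcal A_n)\le \sum_{i\ge 1}\E\Big[\un_{\eta_i\le n}\,\P_{X_{\eta_i}}\big(\rho>n-\eta_i,\ \oX_{n-\eta_i}\ge b_n\big)\Big],
\end{align*}
where $\rho$ denotes the first return from $\partial K$ to $K$. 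I would then split the sum according to whether $\oX_{\eta_i}<b_n/2$ or $\oX_{\eta_i}\ge b_n/2$.

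When $\oX_{\eta_i}<b_n/2$, the walk on the axis must climb from its entrance level up to $b_n$ before leaving. Since at axis position $k\ge 1$ the upward transition has probability only $1/(4k^{\alpha})$ against a downward probability close to $1$, a standard birth-and-death hitting-probability computation bounds $\P_y(\max_{s<\rho}\oX_s\ge b_n)$ by $\prod_{\ell=\oy}^{b_n-1}(4\ell^{\alpha}-3)^{-1}$, which is super-polynomially small. Combined with a tail bound of the type $\P(X_{\eta_i}=y)\le C/\oy^{\alpha}$ (analogous to \eqref{lem4.6paper1}) and $\E[N_n]=O(n/\log n)$ from Proposition \ref{prop31}, the total contribution of this regime is $o(1)$.

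For $\oX_{\eta_i}\ge b_n/2$, I would first bound $\P_y(\rho>n-\eta_i,\ \oX_{n-\eta_i}\ge b_n)\le C_0\oy/(n-\eta_i+1)$ using Markov's inequality together with $\E_y(\rho)\le C_0\oy$ (Lemma~4.8 of \cite{AndDeb3}). Next, integrating the tail bound $\P_z(\oX_\eta\ge t)\le C\oz/t^2$ from \eqref{remark5.2paper1} yields $\E[\oX_{\eta_i}\un_{\oX_{\eta_i}\ge b_n/2}\,|\, \mathcal F_{\rho_{i-1}}]\le 4C\oX_{\rho_{i-1}}/b_n$. After exchanging the sums over $i$ and $t=\eta_i$, using the renewal-type estimate $\P(\exists i:\eta_i=t)\lesssim 1/\log t$ coming from Proposition \ref{prop31} and $\E[\oX_{\rho_i}]=O(1)$ under $\pi^{\dag}$, the resulting contribution reduces to a multiple of $b_n^{-1}\sum_{t=2}^n 1/\bigl((n-t+1)\log t\bigr)=O(b_n^{-1})=o(1)$.

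The main obstacle is the renewal-type averaging in the third paragraph: the $\eta_i$ do not form an i.i.d.\ point process, and $\oX_{\rho_{i-1}}$ is correlated with $\un_{\eta_i=t}$, so the heuristic density $1/\log t$ requires justification uniform in $t\in[1,n]$. To handle this rigorously I would localize $\oX_{\rho_{i-1}}\le b_n^{1-2\varepsilon}$ on a high-probability event via Lemma~\ref{uniform} and bound the complement by a union bound analogous to the one used in its proof; on the localized event, the averaging reduces to the deterministic sum above.
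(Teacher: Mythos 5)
Your decomposition by the last exit time $\eta_{N_n+1}$ from $K$ is a valid alternative to the paper's decomposition by the last entrance time $\rho_{N_n}$ (which the paper localizes via Theorem~\ref{SONn} and Lemma~\ref{uniform} before splitting). The small-exit-level case (your $\oX_{\eta_i}<b_n/2$, the paper's $\oy<r_n$ term $II$) is handled by essentially the same axis-climbing estimate in both arguments, and that part is fine.

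The large-exit-level case is where the two routes diverge, and where your proposal has a real gap. The paper splits further on the duration $u$ of the cone excursion and uses the joint local limit estimate \eqref{locallimit} for $(\eta,X_\eta)$: a short excursion ($u<c_n$) reaching level $r_n$ is super-exponentially unlikely, and a long one ($u\ge c_n$) costs $\ox(\log n)^{2-\varepsilon}/n$ via \eqref{lleta2}. You instead bound the axis excursion by Markov's inequality, $\P_y(\rho>k)\le C_0\oy/k$, and then try to average over the times $\eta_i=t$; this requires a \emph{local} renewal estimate $\P(\exists i:\,\eta_i=t)\lesssim 1/\log t$ that the paper never establishes (Proposition~\ref{prop31} and Theorem~\ref{SONn} are ergodic/CLT-type statements, not a local renewal theorem). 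More seriously, the proposed fix of localizing $\oX_{\rho_{i-1}}\le b_n^{1-2\varepsilon}$ does not repair the argument: the obstruction is not that $\oX_{\rho_{i-1}}$ might be large, but that $\un_{\eta_i=t}$ and $\oX_{\eta_i}\ge b_n/2$ are strongly correlated, since a high exit level essentially forces $\eta_i-\rho_{i-1}\gtrsim b_n^2$ by \eqref{locallimit}. Once you fix $\oX_{\eta_i}\ge b_n/2$, you have already spent the factor $\P(\eta_i=t)$ you are counting on, and the product bound $\frac{1}{\log t}\cdot\frac{1}{b_n}$ has no rigorous basis. Without it, the crude bound $\sum_i\E[\oX_{\eta_i}\un_{\oX_{\eta_i}\ge b_n/2}]\lesssim n/b_n\to\infty$ is useless. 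If instead of Markov's inequality you use the joint estimate \eqref{locallimit} to see that $\E_x[\un_{\eta=u}\oX_\eta\un_{\oX_\eta\ge b_n/2}]$ is super-exponentially small unless $u\gtrsim b_n^2$ (in which case it is $O(\ox/u^{3/2})$), the resulting double sum over $i$ and $t$ \emph{can} be controlled by the trivial bound $N_n\le n$ together with $\E[\oX_{\rho_{i-1}}]\le C$; this is in effect what the paper's split into $III$ and $IV$ accomplishes. So the underlying idea can be made to work, but via \eqref{locallimit} rather than the Markov-plus-renewal-density route you sketch.
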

\begin{proof} 
Let $\varepsilon>0$, $I_n:= b_n^2(\varepsilon , \varepsilon^{-1} )$, and  $H_n:=\frac{b_n^2}{\log n}( \varepsilon , \varepsilon^{-1})$, Theorem \ref{SONn} implies that ${\lim_{\varepsilon \rightarrow 0}} \lim_{n \rightarrow +\infty} \P(N_n -\co b_n^2 \in H_n, n-\rho_{N_n} \in I_n)=1$, using also Lemma \ref{uniform} 
\begin{align*} 
\P(\mathcal{A}_n) 
 =& \sum_{i-\co b_n^2 \in H_n }\sum_{x,\ox \leq n^{1/2-\varepsilon}} \sum_{m \in I_n } \P(X_{\rho_i}=x,n- \rho_i=m) \times \\
& \P_x(|X_{m}| \geq  b_n e_i,\eta_1 \leq m, \rho_1>m)+o_{\varepsilon}(1), 
\end{align*}
with $\lim_{\varepsilon \rightarrow 0} \lim_{n \rightarrow +\infty}o_{\varepsilon}(1)=0$.\\
Let us examine the second probability, introducing $r_n:=b_n-b_n/ \log n$ and we decompose it as follows:
\begin{align*}
&\P_x(|X_{m}| \geq b_n e_i,\eta_1 \leq m, \rho_1>m)\\
&=\sum_{u=1}^{m} \sum_{y\in K^c} \P_x(\eta=u,X_{u}=y)\P_y(|X_{m-u}|\geq  b_ne_i, \rho>m-u) \\
&=\sum_{u=1}^{m}\Big( \sum_{y, \oy \geq r_n}+ \sum_{y, \oy < r_n}\Big) \P_x(\eta=u,X_{u}=y)\P_y(|X_{m-u}|\geq b_ne_i, \rho>m-u)\\
&=:I+II.
\end{align*}
For $II$, as $\oy < r_n$ and we want $X_{m-u}\geq b_n e_i$, there exists $0\le \ell\le m-u-2$ such that $|X_\ell|=(b_n-2)e_i,\,|X_{\ell+1}|=(b_n-1)e_i$ and $|X_{\ell+2}|=b_ne_i$. Consequently, there exists a positive constant $C_1>0$ such that: 
\begin{align}\label{borneII}
II\le 
\sum_{u=1}^{m} \sum_{y, \oy < r_n} \P_x(\eta=u,X_{u}=y)\frac{C_1n}{b_n^{2\alpha}} \leq \frac{C_1n}{b_n^{2\alpha}}.
\end{align}
$I$ is more delicate and we decompose with respect to the values of $u$, introducing $c_n:=n/ (\log n)^{2-\varepsilon}=o(b_n^2)$ for small $\varepsilon>0$. We write: 
\begin{align*}
I&=\Big( \sum_{u<c_n}+\sum_{u=c_n}^{m}\Big)\sum_{y, \oy \geq r_n}\P_x(\eta=u,X_{u}=y)\P_y(|X_{m-u}|\geq b_ne_i, \rho>m-u)\\
&=:III+IV.
\end{align*}
As $n^{1/2-\varepsilon}=o(\sqrt{c_n})$, $\ox \leq n^{1/2-\varepsilon}$ and $m \geq \varepsilon n/ (\log n)^2$, we can use  \eqref{lleta2}, implying the existence of a positive constant $C_2$ such that for $n$ large enough:
\begin{align}\label{borneIV} 
IV&
\leq \sum_{u=c_n}^{m} \sum_{\oy \geq r_n} \P_x(\eta=u,X_{u}=y)
\leq  \P_x(\eta>c_n)\le C_2\ox\frac{(\log n)^{2-\varepsilon}}{n}.
\end{align}
For $III$, as $\ox=o(r_n)$ and $u<c_n$, then for any $u<c_n$, according to \eqref{locallimit}, there exists a positive constant $C_3$ such that for $n$ large enough:
\[\P_x(\eta=u,X_{u}\geq r_n) \leq C_3 \ox \frac{e^{-\frac{r_n^2}{2 u}}}{u^2},\]
implying the existence of $C_4>0$, such that for $n$ large enough:
\begin{align}\label{borneIII}
III&\leq \sum_{u \leq  c_n} \P_x(\eta=u,X_{u}\geq r_n)\le C_4\ox \frac{e^{-\frac{(\log n)^{1-\varepsilon}}{2}}}{r_n^2}=o\left(\ox\frac{(\log n)^{2-\varepsilon}}{n}\right).
\end{align}

By collecting \eqref{borneII},\eqref{borneIII} and \eqref{borneIV}, there exists a positive constant $C$ such that for $n$ large enough:
\begin{align*}
\P(\mathcal{A}_n) 
& \leq \sum_{i-\co b_n^2 \in H_n }\sum_x \sum_{m \in I_n } \P(X_{\rho_i}=x, n-\rho_i=m) %\times \\
 C\Big(\frac{n}{b_n^{2\alpha}}+\ox\frac{(\log n)^{2-\varepsilon}}{n}\Big) +o(1) \\
&\leq 2C\Big( \frac{n}{\varepsilon \log n }\frac{1}{b_n^{2(\alpha-1)}}+ \frac{(\log n)^{2-\varepsilon}}{n}\sum_{i-\co b_n^2 \in H_n } \E(X_{\rho_i})\Big).
\end{align*}
As $(X_{\rho_i})_i$ is positive recurrent with invariant probability measure $\pi^{\dag}$ and $\E(\pi^{\dag})<+\infty$, then for $n$ large enough $\sum_{i-\co b_n^2 \in H_n } \E(X_{\rho_i}) \leq 2 \E(\pi^{\dag}) \frac{1}{\varepsilon}\frac{n}{(\log n)^2}$.\\
Finally, as $\alpha>3$,  for $n$ large enough, there exists $C_5>0$ such that $\P(\mathcal{A}_n)\leq  C_5 (\log n)^{- \varepsilon}$ which converges to 0.
\end{proof}

\subsection{The limit of $\P(\aXn[n]\geq a b_n)$, with $a=(a_1,a_2)\in (\R_+^*)^2$}

 Since excursions play a central role, we decompose, as in the proof of Lemma \ref{lemneg}, the probability $\mathbb{P}(\aXn[n] \geq a b_n)$ with respect to the number of excursions $N_n$ and the time elapsed $n-\rho_{N_n}$. The reasoning is similar to the one used for the computation of $P_1$ in the proof of Lemma \ref{lemneg}, and as a result, we skip some details to avoid making the text more cumbersome (using in particular Theorem \ref{SONn} and Lemma \ref{uniform}). Let us introduce  $\tilde b_n:=b_n^2/\log n$, and recall the events $\mathcal{G}_n$ in \eqref{Gn}, then for any $\varepsilon>0$ and $\varepsilon'>0$:
\begin{align*}
\P(\aXn[n] \geq a b_n)&= \P(\aXn[n] \geq a b_n,\mathcal{G}_n,{\oX_{\rho_{N_n}}\le n^{\nicefrac{1}{2}-\varepsilon}})+o(1)\\
&=  \sum_{x, \ox \leq n^{1/2- \varepsilon}} \sum_{m \in I_n } \P_x(\aXn[m] \geq a b_n, \eta >m)\\ 
& \hfill \sum_{i-\co b_n^2 \geq \varepsilon' \tilde b_n } \P(X_{\rho_i}=x, n-\rho_i=m,\mathcal{G}_n) +o_{\varepsilon,\varepsilon'}(1) \\ 
& =  {\frac{16}{\pi} (1+o(1))  \sum_{m \in I_n }\frac{1}{m} \P(\mB_{{m/b_n}} \geq 2a )} \\
& \hfill \sum_{\ox \leq n^{1/2- \varepsilon}} \ox\sum_{i-\co b_n^2 \geq \varepsilon' \tilde b_n } \P(X_{\rho_i}=x, n-\rho_i=m,\mathcal{G}_n) +o_{\varepsilon,\varepsilon'}(1)
\end{align*}  
where the last inequality comes from \eqref{Bessel3} and $o_{\varepsilon,\varepsilon'}(1)$ is such that $\lim_{\varepsilon' \rightarrow 0}$ $\lim_{\varepsilon \rightarrow 0}\lim_{n \rightarrow +\infty}o_{\varepsilon,\varepsilon'}(1)=0$. 
And with the same idea used to obtain \eqref{dmun}, we have:
\begin{align}
&   \frac{16}{\pi}\sum_{m \in I_n } \frac{1}{m} \P(\mB_{{m/b_n}} \geq 2a ) \times  \nonumber \\
& \sum_{\ox \leq n^{1/2- \varepsilon}} \ox \sum_{i-\co b_n^2 \geq \varepsilon' \tilde b_n} \P(X_{\rho_i}=x, n-\rho_i=m,\mathcal{G}_n) \nonumber \\
& =  \frac{16}{\pi} \sum_{m \in I_n }  \frac{1}{m} \P(\mB_{{m/b_n}} \geq 2a ) \E\Big(\sum_{i-\co b_n^2 \geq \varepsilon' \tilde b_n } \oX_{\rho_i}\un_{ n-\rho_i=m,\mathcal{G}_n}\Big)+o_{\varepsilon,\varepsilon'}(1), \label{thesum}
\end{align}

Then we see appear the measure $\mu_n^{\varepsilon'}$ (introduced  in \eqref{Mesure}), its vague convergence (Proposition \ref{Vague}) together with the distribution of $\mB_{{m/b_n}}$ (see \eqref{Bessel3}) implies that the above expression converges when $n$ goes to infinity to  

\begin{align*}
 \int_{\varepsilon }^{1/\varepsilon} \frac{1}{s} \P(\mB_{ s } \geq 2 a) \mathrm{d}\mu_{\mathcal{S}_1^{\varepsilon'}}(s).
\end{align*}
Taking the limit when $\varepsilon'$ and $\varepsilon$ goes to zero, we obtain Theorem \ref{The1}.

\section{Appendix}
The appendix is essentially devoted to the proof of the Laplace transform of $\rho_1$ with constraints, that is, the proof of the following Lemma: 

\subsection{Proof of Lemma \ref{leLem} \label{techlem}}
Let us first recall this lemma : \\
{\bf Lemma \ref{leLem}} \textit{ Let $x\in K$, $y \in \partial K$, when $m$ goes to infinity, uniformly in $\ox=o(m^{1/2})$ and $\oy\le m^{\nicefrac{1}{2}-\varepsilon}$, for any $\varepsilon>0$ small enough:
\begin{align*} &\E_{x}\left(e^{- \frac{\lambda}{m} \rho_1 }\un_{X_{\rho_1}=y,X_{\eta_1}\le m^{\nicefrac{1}{2}+\varepsilon}}\right)- \P_x(X_{\rho_1}=y)+ \frac{\lambda}{m} \E_x(\eta \un_{\eta \leq m} \un_{X_{\rho_1}=y}) \\
& =-\frac{\lambda}{m}\E_x( \E_{X_{\eta}}(\rho\un_{X_{\rho}=y}) )+ \frac{\lambda}{m} \frac{8 \ox}{\pi } ( \gamma-1+  \log \lambda ) \P_{\infty}(X_{\rho}=y) + o\Big(\frac{\ox}{m}\Big),
\end{align*}}

\begin{proof}
By the strong Markov property, one can write: 
\begin{align*}
 \E_{x}\left(e^{- \frac{\lambda}{m} \rho_1 } \un_{X_{\rho_1}=y,\oX_{\eta_1}\le m^{\nicefrac{1}{2}+\varepsilon}}\right ) = \sum_{z\in K^c,\oz\le m^{\nicefrac{1}{2}+\varepsilon}}\E_{z}\left(e^{- \frac{\lambda}{m} \rho } \un_{X_{\rho}=y} \right ) \E_{x}\left(e^{- \frac{\lambda}{m} \eta } \un_{X_{\eta}=z}\right).
\end{align*}
   
As $\E[\rho^2]<\infty$ (see \eqref{lem4.7paper1}) and $e^{-x}<1-x+\frac{x^2}{2}$ for all $x>0$, when $m$ goes to infinity:

\[ \E_{z}\left(e^{- \frac{\lambda}{m} \rho } \un_{X_{\rho}=y} \right )=\P_z(X_{\rho}=y)-\frac{\lambda}{m}\E_z(\rho\mathds{1}_{X_{\rho}=y})+o\Big(\frac{1}{m}\Big) .\]

Recall also that in \cite{AndDeb3} (Lemma 5.4) the study of the tail of $\eta$ is obtained; however, here we need more details as we want the second order. Integrating by parts the Laplace transform of $\eta$ gives:
\begin{align}
   \E_{x}\left(e^{- \frac{\lambda}{m} \eta } \un_{X_{\eta}=z}\right)=\P_x(X_{\eta}=z)  - \lambda \int_0^{ +\infty} e^{-\lambda u}\P_x(\eta>um, X_{\eta}=z)\mathrm{d}u. \label{lintegr}
\end{align}
Note that,
\begin{align*}
 & \sum_{z\in K^c,\oz\le m^{\nicefrac{1}{2}+\varepsilon}}\P_x(X_{\eta}=z)\Big(  \P_z(X_{\rho}=y)-\frac{\lambda}{m}\E_z(\rho\mathds{1}_{X_{\rho}=y})+o\Big(\frac{1}{m}\Big) \Big)  \\ 
 & = \P_x(X_{\rho_1}=y)-\frac{\lambda}{m} \E_x(\E_{X_\eta}(\rho \un_{X_{\rho}=y}))+o\Big(\frac{1}{m}\Big),
\end{align*}
indeed by applying \eqref{momrho} and \eqref{locallimitX},  $\sum_{z>m^{\nicefrac{1}{2}+\varepsilon}}\P_x(X_{\eta}=z)\E_z(\rho\mathds{1}_{X_{\rho}=y})=o(1)$.
Thinking similarly, we also process the integral in \eqref{lintegr} and finally obtain:
\begin{align}
 &\E_{x}\left(e^{- \frac{\lambda}{m} \rho_1 } \un_{X_{\rho_1}=y}\right )-\P_x(X_{\rho_1}=y)+\frac{\lambda}{m} \E_x(\E_{X_\eta}(\rho \un_{X_{\rho}=y})) \nonumber \\
 &= -\lambda \sum_{z\in K^c, \oz\le m^{\nicefrac{1}{2}+\varepsilon}}\P_z(X_{\rho}=y) \int_0^{+\infty } e^{-\lambda u}\P_x(\eta>um, X_{\eta}=z)\mathrm{d}u \nonumber\\
& +\frac{\lambda^2}{m}\sum_{z\in K^c, \oz\le m^{\nicefrac{1}{2}+\varepsilon}}\E_z(\rho\mathds{1}_{X_{\rho}=y}) \int_0^{ +\infty} e^{-\lambda u}\P_x(\eta>um, X_{\eta}=z)\mathrm{d}u+o\Big(\frac{1}{m}\Big) \nonumber\\
 &= -\lambda \sum_{z\in K^c}\P_z(X_{\rho}=y) \int_0^{+\infty } e^{-\lambda u}\P_x(\eta>um, X_{\eta}=z)\mathrm{d}u \nonumber\\
& +\frac{\lambda^2}{m}\sum_{z\in K^c, \oz\le m^{\nicefrac{1}{2}+\varepsilon}}\E_z(\rho\mathds{1}_{X_{\rho}=y}) \int_0^{ +\infty} e^{-\lambda u}\P_x(\eta>um, X_{\eta}=z)\mathrm{d}u+o\Big(\frac{1}{m}\Big) \nonumber \\
&=:-\lambda A+\frac{\lambda^2}{m}B+o\Big(\frac{1}{m}\Big). \label{AB}
\end{align}
The primary contribution stems from $A$, so we begin by studying this quantity.\\
\noindent $\bullet$ \textit{Contribution of A,} we first decompose the integral into two, let $C>0$ an integer:
\begin{align*}
A& = \sum_{z\in K^c} \P_z(X_{\rho}=y) I_1(z) + \sum_{z\in K^c} \P_z(X_{\rho}=y) I_2(z) =: \sum_{ \leq C}+\sum_{ > C}
\end{align*}
with $I_1(z)=: \int_0^C e^{-\lambda u}\P_x(\eta>um, X_{\eta}=z)\mathrm{d}u$ and $I_2(z)=:\int_C^{+\infty} e^{-\lambda u}\P_x(\eta>um, X_{\eta}=z)\mathrm{d}u$. 
\\
Let us prove that the contribution of $\sum_{ > C}$, is negligible. \\ 
\noindent {\bf Step 1 : contribution of $\sum_{ > C}$} \\
First, we write: 
\begin{align*}
     \sum_{z\in K^c}\P_z(X_{\rho}=y) I_2(z)  & \le   \sum_{z\in K^c} \int_C^{+\infty } e^{-\lambda u}\P_x(\eta>um, X_{\eta} =z )\mathrm{d}u\\
  & \le \int_C^{+\infty } e^{-\lambda u}\P_x(\eta>um)\mathrm{d}u \le \P_x(\eta>Cm )\frac{e^{-\lambda C}}{\lambda}.
\end{align*}
Then as $\ox =o(m^{1/2})$, according to \eqref{lleta2}:
\begin{equation}
\P_x(\eta>Cm)=\frac{8}{\pi} \frac{ \ox }{Cm}(1+o(1)), 
\end{equation}
implying that $\sum_{>C}$ is bounded above by $\frac{16 \ox }{m} \frac{e^{-\lambda C}}{\lambda C }$. For large $C$ this part will therefore lead to a negligible contribution compared to $\ox/m$.

\medskip
\noindent {\bf Step 2 : contribution of $\sum_{ \leq  C}$} \\
We first rewrite $I_1(z)$ using the fact that $\eta$ is an integer:
\begin{align*}
    & \int_0^C e^{-\lambda u}\P_x(\eta>um, X_{\eta}=z)\mathrm{d}u=\sum_{i=0}^{{Cm-1}} \int_{i/m}^{(i+1)/m} e^{-\lambda u}\P_x(\eta>u m,  X_{\eta}=z)\mathrm{d}u \\ 
    & = \sum_{i=0}^{Cm-1} \P_x(\eta> i , X_{\eta}=z) \int_{i/m}^{(i+1)/m} e^{-\lambda u}\mathrm{d}u \\
    & = \frac{1}{\lambda} \sum_{i=0}^{Cm-1} \P_x(\eta> i , X_{\eta}=z) e^{-\lambda i /m }(1- e^{-\lambda /m } ) \\
    & = \frac{1}{m} (1-O(1/m)) \sum_{i=0}^{Cm-1} \P_x(\eta> i , X_{\eta}=z) e^{-\lambda i /m }
\end{align*}
So to treat $\sum_{ \leq C}$, we are interested in the following sum,
\begin{align*}
\sum_{z\in K^c }\P_z(X_{\rho}=y)\sum_{i=0}^{Cm-1} \P_x(\eta> i , X_{\eta}=z) e^{-\lambda i /m }. 
\end{align*}
In the previous sums, we decompose the sum over $i$ into two parts: 
\[\sum_{z\in K^c }\sum_{i=0}^{Cm-1} \cdots =\sum_{z\in K^c }\Big(\sum_{i \leq b_m^2} \cdots + \sum_{i= b_m^2+1}^{Cm-1} \cdots\Big) =: \Sigma_1+\Sigma_2,\]
where $b_m^2$ is defined in \eqref{defbninutile} and is equivalent to $\nicefrac{m}{\log m}$ as $m$ goes to infinity.\\
 \textit{$\bullet$ For $\Sigma_2$}, we decompose this sum into two terms depending on the values of $z$:
 \[\Sigma_2:= \sum_{\oz \geq m^{1/2-\varepsilon}}+ \sum_{\oz < m^{1/2-\varepsilon}}=: \Sigma_2^1+\Sigma_2^2.\] 
\textit{We first deal with $\Sigma_2^1$}, 
as  $\oz \geq \oy$ is large, $\P_z(X_{\rho}=y)= \P_{\infty}(X_{\rho}=y)(1+o(1))$ (see Notations \ref{nota1}), and with a reasoning similar to the one that shows $\sum_{>C}$ to be negligible:
\begin{align*}
\Sigma_2^1
&= \P_{\infty}(X_{\rho}=y)(1+o(1)) \sum_{i=b_m^2+1}^{Cm-1} \P_x(\eta>i, \oX_{\eta} \geq  m^{1/2-\varepsilon} ) e^{-\lambda i /m }\\
&= \P_{\infty}(X_{\rho}=y)(1+o(1)) \sum_{i=b_m^2+1}^{\infty} \P_x(\eta>i, \oX_{\eta} \geq  m^{1/2-\varepsilon} ) e^{-\lambda i /m }+o(1).
\end{align*}
As $ m^{1/2-\varepsilon}=o(b_m)$, we have $\P_x(\eta>i, \oX_{\eta} \geq  m^{1/2-\varepsilon} )  =\frac{8\ox}{i\pi}+o\left(\frac{\ox}{i}\right)$ (see \eqref{estimate}) and thus:
\begin{align*}
& \sum_{i=b_m^2+1}^{\infty} \P_x(\eta>i, \oX_{\eta} \geq  m^{1/2-\varepsilon} ) e^{-\lambda i /m } \\
&=\frac{ 8 }{\pi} \ox \Big(-\log(1-e^{-\lambda/m})-\sum_{i=1}^{b_m^2}\frac{e^{-\lambda i/m}}{i}\Big)+o(\ox).
\end{align*}
The first term yields $\log(1-e^{-\lambda/m})= -\log m+\log \lambda+o(1)$. The second term $\sum_{i=1}^{b_m^2}\frac{e^{-\lambda i/m}}{i}=\sum_{i=1}^{b_m^2}\frac{1}{i}+o(1)=\log b_m^2+\gamma+o(1) $ where $\gamma$ is the constant of Euler–Mascheroni. 
So we obtain:
\begin{align*}
\Sigma_2^1=\P_{\infty}(X_{\rho}=y)(1+o(1))\frac{8}{\pi} \ox( \log m  - \log \lambda -\log b_m^2- \gamma+o(1)).
\end{align*}
For further purpose, we would like to make the truncated mean of $\eta$ appear instead of $\log m$ and $\log b_m^2$, so recall (see \eqref{lleta3})  
that for large $m$, and $\ox =o(m^{1/2})$, $\E_x(\eta \un_{\eta \leq m})=\frac{8}{\pi} \ox \log m+o(1) $. So finally 
\begin{align}
\Sigma_2^1=\P_{\infty}(X_{\rho}=y) \Big( \E_x(\eta \un_{\eta \leq m})-\E_x(\eta \un_{\eta \leq b_m^2}) -\frac{8}{\pi} \ox(\log \lambda+ \gamma)\Big)+o(\ox). \label{Eeta}
\end{align}
 \textit{For $\Sigma_2^2$}, as $\oz\le m^{1/2-\varepsilon}$, using  \eqref{estimate2}, for $m$ large enough:
\begin{align*}
    \Sigma_2^2\le \sum_{i\ge b_m^2}\P_x(\eta>i, \oX_\eta\le m^{1/2-\varepsilon})\le \frac{4\ox }{\pi}m^{1-2\varepsilon}\sum_{i\ge b_m^2}\frac{1}{i^2}\le \frac{8\ox }{\pi}\frac{\log m}{m^{2\varepsilon}}=o(1).
\end{align*} 
\noindent  \textit{$\bullet$ For $\Sigma_1$}, as $i \leq b_m^2$ we can get rid of the $e^{- \lambda i/m}$, also similarly as for $\Sigma_2$, we decompose $\Sigma_1$ with respect to the values of $z$:
  \begin{align}
    \Sigma_1 \sim \sum_{z\in K^c }\P_z(X_{\rho}=y)\sum_{i\leq b_m^2} \P_x(\eta> i , X_{\eta}=z) =: \Sigma_1^1+\Sigma_1^2.
\end{align}
\noindent 
For $\Sigma_1^2$,  we would like to merge it with the first expectation in \eqref{Eeta}. First we just rewrite $\Sigma_1^2$ : 
\begin{align*}
\Sigma_1^2 & =  \sum_{\oz\leq m^{1/2-\varepsilon}} \P_{z}(X_{\rho}=y)  \sum_{i=0}^{b_m^2}\P_x(\eta>i, X_\eta=z) \\
&=  \sum_{\oz\leq m^{1/2-\varepsilon}} \P_{z}(X_{\rho}=y)  \sum_{i=0}^{b_m^2}(\P_x(b_m^2\ge \eta>i, X_\eta=z)+\P_x(\eta>b_m^2, X_\eta=z) )\\
&=  \sum_{\oz\leq m^{1/2-\varepsilon}} \P_{z}(X_{\rho}=y)  (\E_x(\eta\mathds{1}_{\eta\le b_m^2, X_\eta=z})+(b_m^2+1)\P_x(\eta>b_m^2, X_\eta=z) )\\
&=:  \sum_{\oz\leq m^{1/2-\varepsilon}} \P_{z}(X_{\rho}=y)  \E_x(\eta\mathds{1}_{\eta\le b_m^2, X_\eta=z})+\mathscr B_m.
\end{align*}
Then, using \eqref{estimate2}, there exists $C>0$ such that for $m$ large enough:
\begin{align*}
\mathscr B_m\le (b_m^2+1)\P_x(\eta>b_m^2, \oX_\eta\le m^{\nicefrac{1}{2}-\varepsilon})\le C\frac{ \ox m^{1-2\varepsilon}}{b_m^2}\sim C \frac{\ox \log m}{m^{2\varepsilon}}=o(\ox).
\end{align*}
Thus
\begin{align*}
\Sigma_1^2&=\sum_{\oz\leq m^{1/2-\varepsilon}} \P_{z}(X_{\rho}=y)  \E_x(\eta\mathds{1}_{\eta\le b_m^2, X_\eta=z})+o(\ox)\\
&-\sum_{\oz\leq m^{1/2-\varepsilon}} \P_{z}(X_{\rho}=y)  \E_x(\eta \un_{b_m^2 \leq \eta \leq m, X_{\eta}=z})+o(\ox) \\
&= \sum_{\oz\leq m^{1/2-\varepsilon}} \P_{z}(X_{\rho}=y)  \E_x(\eta \un_{\eta \leq m, X_{\eta}=z}) +o(\ox).
\end{align*}
where the last equality comes from the fact that the second sum above is small due to the contradictory condition that $z$ is small but $\eta$ is large (see \eqref{locallimit}).
Using again that for $\oz >m^{1/2-\varepsilon}$, $\P_z(X_{\rho}=y)= \P_{\infty}(X_{\rho}=y)(1+o(1))$, we write:
 \begin{align*}
\P_{\infty}(X_{\rho}=y) \E_x(\eta \un_{\eta \leq m})& =\sum_{\oz> m^{1/2-\varepsilon}} \P_{z}(X_{\rho}=y) \E_x(\eta \un_{\eta \leq m, X_{\eta}=z}) \\
&+ \P_{\infty}(X_{\rho}=y)\E_x(\eta \un_{\eta \leq m, \oX_{\eta}\leq m^{1/2-\varepsilon}}).
\end{align*}
Then we can see that the main contribution of $\Sigma_1^2 $ and the first sum above are complementary,  
\begin{align*}
&\Sigma_1^2 +\P_{\infty}(X_{\rho}=y) \E_x(\eta \un_{\eta \leq m})  \\ &
=\E_x(\eta \un_{\eta \leq m,X_{\rho_1}=y})+ \P_{\infty}(X_{\rho}=y)  \E_x(\eta \un_{\eta \leq m, \oX_{\eta}\le m^{1/2-\varepsilon}})+o(\ox).
\end{align*}
\noindent Finally for $\Sigma_1^1$, as $\oz>\oy$ and large, with a similar reasoning as the one for $\Sigma_1^2$, using \eqref{estimate}:
\begin{align*}
&    \Sigma_1^1=(1+o(1))\P_{\infty}(X_{\rho}=y)\E_x(\eta\un_{\eta\leq b_m^2,\oX_{\eta}> m^{1/2-\varepsilon}})\\
&+(1+o(1))\P_{\infty}(X_{\rho}=y)(b_m^2+1)\P_x(\eta>b_m^2, \oX_\eta>m^{\nicefrac{1}{2}-\varepsilon})\\
&  =(1+o(1))\P_{\infty}(X_{\rho}=y)\Big(\E_x(\eta\un_{\eta\leq b_m^2,\oX_{\eta}>m^{1/2-\varepsilon}})+\frac{8}{\pi}\ox \Big)+o(\ox).
\end{align*} 
To summarize, by rearranging the terms in $\Sigma_1 + \Sigma_2 = \Sigma_1^1 + \Sigma_1^2 + \Sigma_2$, it can be written as:
\begin{align*} 
\Sigma_1+\Sigma_2& =\E_x(\eta \un_{\eta \leq m,X_{\rho_1}=y}) -\P_{\infty}(X_{\rho}=y) \frac{8}{\pi} \ox(\log \lambda+ \gamma-1)  \\
&+\P_{\infty}(X_{\rho}=y)(\E_x(\eta\un_{\eta \leq b_m^2,\oX_{\eta}>  m^{1/2-\varepsilon}})-   \E_x(\eta \un_{\eta \leq b_m^2}))\\
& +\P_{\infty}(X_{\rho}=y)  \E_x(\eta \un_{\eta \leq m, \oX_{\eta}\le m^{1/2-\varepsilon}})+o(\ox)\\
&=\E_x(\eta \un_{\eta \leq m,X_{\rho_1}=y}) -\P_{\infty}(X_{\rho}=y) \frac{8}{\pi} \ox(\log \lambda+ \gamma-1)  \\
&+\P_{\infty}(X_{\rho}=y)  \E_x(\eta \un_{b_m^2<\eta \leq m, \oX_{\eta}\le m^{1/2-\varepsilon}})+o(x)\\
&=\E_x(\eta \un_{\eta \leq m,X_{\rho_1}=y}) -\P_{\infty}(X_{\rho}=y) \frac{8}{\pi} \ox(\log \lambda+ \gamma-1)+o(\ox).
\end{align*}
And note that the last expectation is a $o(1)$ as the two constraints $X_{\eta}$ small and $\eta$ large appear in the expression.\\
We are left to prove that the term with $B$ in \eqref{AB}, is negligible.\\
$\bullet$ {\it Contribution of $B$}:
\[\frac{\lambda^2}{m}B=\frac{\lambda^2}{m}\sum_{z\in K^c, \oz\le m^{\nicefrac{1}{2}+\varepsilon}}\E_z(\rho\mathds{1}_{X_{\rho}=y}) \int_0^{ +\infty} e^{-\lambda u}\P_x(\eta>um, X_{\eta}=z)\mathrm{d}u.\]
Note that it suffices to prove that there exists a positive constant $C$ and $0<\beta<1$ such that for all $\oz\le m^{1/2+\varepsilon}$,
\begin{align} \E_z[\rho\mathds{1}_{X_{\rho}=y}]\le C \P_z(X_\rho=y)m^\beta. \label{eq33b}\end{align}
Writing: 
\begin{align*}
\E_z[\rho\mathds{1}_{X_{\rho}=y}]&=\sum_{k>0}\P_z(\rho>k, X_\rho=y)\\
&=\sum_{k\leq m^\beta}\P_z(\rho>k, X_\rho=y)+\sum_{k>m^\beta}\P_z(\rho>k, X_\rho=y)\\
&=:C_1+C_2.
\end{align*}
First, note that $C_1\le m^\beta \P_z(X_\rho=y)$ and as a result, we just have to treat $C_2$.\\
For typographical simplicity, we introduce $y^\prime\in K^c$ such that $y=y^\prime\pm e_i$, where $e_i$ is a vector of the canonical basis (if $\oy=1$, we do not have the uniqueness of $y^\prime$). As a result, $\oy=\overline{y^\prime}$ and one can write: 
\begin{equation}\label{direct}
\P_z(X_\rho=y)\ge\P(z\rightarrow y^\prime)\frac{1}{4\oy^\alpha}, 
\end{equation}
where $\P(z\rightarrow y^\prime)$ is the probability that starting from $z$ the walk follows the shortest path along the axes from  $z$ to $y^\prime$.\\
Now, we split into two cases: when $z$ is in $L_{y}:=\lbrace z\in K^c, \oz\ge \oy,<z,y^\prime>>0\rbrace$ or not.\\
\underline{Case $z\in L_y$:}
As $\oy\le m^{\nicefrac{1}{2}-\varepsilon}$, using \eqref{direct}:
\begin{align*}
\P_z(X_\rho=y) \geq  \frac{1}{4\oy^{\alpha}}\prod_{w>\oy}\left(1-\frac{3}{4\ow^\alpha}\right)\ge \frac{C}{\oy^\alpha} \geq \frac{C}{ m^{\alpha(1/2-\varepsilon)}}.
\end{align*}
Now taking $\beta=\nicefrac{1}{2}+2\varepsilon$, we have $\oz=o(m^\beta)$ and using \eqref{lem4.7paper1}, for any $r>0$, there exists a positive constant $c$:
\begin{align*}
 \sum_{k>m^\beta}\P_z(\rho>k, X_\rho=y)\leq  \sum_{k>m^\beta}\P_z(\rho>k) \leq \sum_{k>m^\beta} \frac{1}{k^r} \leq \frac{c}{m^{\beta(r-1)}}.
\end{align*}
Taking $r=\alpha+1$, one can see that:
\begin{align*}
 \sum_{k>m^\beta}\P_z(\rho>k, X_\rho=y)\leq \P_z(X_\rho=y).
\end{align*}
\underline{Case $z\notin L_y$:}
Take a path $\Gamma$ from $z$ to $y^\prime$ on $K^c$ of length $q-1$, $\Gamma:=(x_0=z,\,x_1,\,\dots,\, x_{q-2}, x_{q-1}=y^\prime)$. Its probability is
\[\P_z(X_1=x_1,\dots, X_{q-1}=y^\prime)=\prod_{i=0}^{q-2}p(x_i,x_{i+1})=\P(z\rightarrow y^\prime) A_{\Gamma}\]
and note that $A_{\Gamma}$ is a product such that if $p(x_i, x_{i+1})$ appears in $A_{\Gamma}$ there is also necessarily $j\ne i$, such that $p(x_j, x_{j+1})=p(x_{i+1}, x_{i})$.\\ 
One can note that the probability of each of this loop satisfies that: 
\begin{equation}
p(x_i, x_{i+1})p(x_i, x_{i+1})\le \frac{1}{4}\max\left(\frac{3}{4},1-\frac{3}{2^{\alpha+2}}\right)=:\frac{\omega_\alpha}{4}.
\end{equation}
A classical result states that the number of paths from $z$ to $y^\prime$ of length $q-1$ is $\binom{q-1}{\frac{q-\ell_{zy^\prime}-1}{2}}$ where $\ell_{zy^\prime}$ denotes the length of the shortest path from $z$ to $y^\prime$. More precisely, as $z\notin L_y$: 
\[\ell_{zy^\prime}=(\oy-\oz)\mathds{1}_{<y^\prime,z>>0}+(\oy+\oz)\mathds{1}_{<y^\prime,z>\le 0}.\]
Note that $\frac{q-\ell_{zy^\prime}-1}{2}$ is also the number of loops, as a result, using the fact that $\oz=o(m^\beta)$ and $\oy=o(m^\beta)$, for $m$ large enough: 
\begin{align*}
\P_z(X_\rho=y, \rho=q)&\le \P(z\rightarrow y^\prime)\frac{1}{4\oy^\alpha}\binom{q-1}{\frac{q-\ell_{zy^\prime}-1}{2}}\left(\frac{\omega_\alpha}{4}\right)^{\frac{q-\ell_{zy^\prime}-1}{2}}\\
&\le \P(z\rightarrow y^\prime)\frac{1}{4\oy^\alpha}\omega_\alpha^{\frac{q-\ell_{zy^\prime}-1}{2}}\left(\frac{1}{4}\right)^{\frac{-\ell_{zy^\prime}-1}{2}}.
\end{align*}
Using this previous inequality, for $m$ large enough there exists a positive constant $\tilde C$ such that: 
\begin{align*}
 \sum_{k>m^\beta}\P_z(\rho>k, X_\rho=y)&\leq  \tilde C\P(z\rightarrow y^\prime)\frac{1}{4\oy^\alpha}\omega_\alpha^{\frac{m^\beta}{2}}\left(\frac{1}{4}\right)^{\frac{-\ell_{zy^\prime}-1}{2}}\\
 &\leq  \tilde C\P(z\rightarrow y^\prime)\frac{1}{4\oy^\alpha}\\
 &\le \tilde C\P_z(X_\rho=y), 
\end{align*} 
since $\omega_\alpha^{\frac{m^\beta}{2}}4^{\frac{\ell_{zy^\prime}+1}{2}}$ tends to 0 when $m$ goes to infinity as $\ell_{zy^\prime}=o(m^\beta)$.\\
As a result, there exists a positive constant $C$ such that:
\[C_2\le C \P_z(X_\rho=y),\]
and finally
\[\E_z(\rho\mathds{1}_{X_{\rho}=y}) \leq C \P_z(X_\rho=y) m^{1/2+2\varepsilon}.\]
Taking $\varepsilon$ small enough we obtain \ref{eq33b}.
\end{proof}

\subsection{Facts about the random walk in the cone and on the axis}
This section presents a compilation of facts, primarily concerning exit times and their coordinates from a cone or axis. 
\medskip

\noindent {\bf Facts for the random walk on the axis}

\noindent The following facts can be found in Section 4 of \cite{AndDeb3}.

\begin{enumerate}
\item There exists a positive constant $c_+$ such that for all $y\in K^c$ and all $x\in \partial K$:
\begin{equation}\label{upperbound1}
\P_y(X_\rho=x)\le \frac{c_+}{\ox^\alpha}.
\end{equation}
\item Assume $\alpha>2$, there exists $c$ and $c'$  such that for any $i$ and $x \in K$  
\begin{equation}\label{lem4.6paper1}
\E(\oX_{\rho_i}) \leq c \textrm{ and } \P(X_{\rho_i}=x) \leq c'/\ox^{\alpha}.
\end{equation} 
\red{\item For any $x$ such that $\ox$ is large
\begin{equation} \label{momrho} 
\E_{x}(\rho) \sim \ox,
\end{equation}
moreover for any $r>0$ and any $x$ such   $\ox=o(m)$, 
\begin{equation}\label{lem4.7paper1}  \P_{x}(\rho > m) \leq m^{-r}.
\end{equation} }
\end{enumerate}

\noindent {\bf Facts for the random walk in the cones} \\
The following Facts can be found or deduced (see below) in the Section 5 of \cite{AndDeb3}.

\begin{enumerate}
\item  For any $x\in \partial K$ such that $\ox=o(\sqrt{k})$, 
\begin{align}
\lim_{k \rightarrow +\infty} \frac{k^2}{\ox}\P_{x}(\eta=k)= \frac{8}{\pi}. \label{lleta}
\end{align}
Implying that:
\begin{align}
\lim_{k \rightarrow +\infty} \frac{k}{\ox}\P_{x}(\eta>k)=\frac{8}{\pi}, \label{lleta2}
\end{align}
and
\begin{align}
\lim_{k \rightarrow +\infty} \frac{1}{\ox\log k}\E_{x}[\eta\mathds{1}_{\eta <k}]=\frac{8}{\pi}. \label{lleta3}
\end{align}

\item Assume that $x$ and $y$ are in the same quarter plane and $\ox =o(\oy)$, then 
\begin{equation}
  \P_x( X_{\eta}=y) \sim  \frac{16}{\pi} \frac{\ox}{\oy^3}  \label{locallimitX},
\end{equation}
in particular
there exists a positive constant $C$ such that:
\begin{equation}\label{remark5.2paper1}
\P_x(\oX_\eta>\oy)\le C \frac{\ox}{\oy^2},
\end{equation}

also for any large $k$ and $ y \in K^c $,
\begin{equation}
  \P_x(\eta=k, X_{\eta}=y) \sim \frac{2}{\pi} \frac{\ox \cdot \oy}{k^3} e^{-(\ox^2+\oy^2)/2k }  \label{locallimit},
\end{equation}

moreover : 
\begin{equation}\label{estimate}
  \textrm{if } \oy=o(\sqrt{m}),\  \P_x(\eta> m, \oX_{\eta}\ge \oy) \sim \frac{8\ox}{\pi m}.
\end{equation}
{Otherwise if $x=(x_1,x_2)$ with $x_1>0$, $x_2>0$ such that  $\ox=o(\sqrt{m})$, and $a_1>0$ and $a_2>0$ then for any $s>0$ 
\begin{align}
   & \P_x(\eta> sm, X_{\eta}\ge (a_1 \sqrt m,a_2 \sqrt m) ) \nonumber \\ 
   & \sim \frac{16\ox}{\pi m} e^{-a_1^2/s}e^{-a_2^2/s} =\frac{16\ox}{\pi m} \P( \mB_{s} \geq (2a_1,2a_2) ) \label{Bessel3}
\end{align}
with $\mB_{s}=(\mB_{s}^1,\mB_s^2)$ and $\mB^1$ and $\mB^2$ are two independent Bessel processes of dimension 3. }\\
Finally 
\begin{equation}\label{estimate2}
  \P_x(\eta> m,\oX_{\eta}< \oy) \sim \frac{2\ox\,\oy^2 }{\pi m^2}.
\end{equation}
\end{enumerate}

\noindent A few words on the last estimates. In \cite{AndDeb3} we do not have exactly \eqref{locallimit}, but this can be deduced from what is denoted $\sum_{12}$ page 17 of \cite{AndDeb3}. Also we can get \eqref{estimate} from \eqref{locallimit}, indeed 
\begin{align*}
& \P_x(\eta>m, \oX_{\eta} \geq \oy) \sim \frac{2\ox}{\pi }\sum_{k > m} \sum_{z,\oz \geq  \oy} \frac{ \oz}{k^3}e^{-(\ox^2+\oz^2)/2k } \\
& \sim \frac{2\ox}{\pi}\sum_{k > m} \sum_{z,\oz \geq  \oy} \frac{ \oz}{k^3}e^{-\oz^2/2k }= \frac{8\ox}{\pi}\sum_{k > m} \frac{e^{-\oy^2/2k }}{k^2} \\
&\sim \frac{8\ox}{\pi}\sum_{k > m} \frac{1}{k^2}\sim \frac{8}{\pi} \frac{ \ox }{m}
\end{align*}
and also  \eqref{estimate2}, 
\begin{align*}
& \P_x(\eta>m, \oX_{\eta} < \oy) \sim \frac{2\ox}{\pi }\sum_{k > m}  \frac{1}{k^3}\sum_{z,\oz <  \oy} \oz \sim \frac{8\ox}{\pi}\frac{1}{2m^2}\frac{\oy^2}{2}.
\end{align*}

\bibliographystyle{plain}
\bibliography{thbiblio}

\end{document}